\theoremstyle{plain}
\newtheorem{theorem}{Theorem}[section]
\newtheorem{corollary}[theorem]{Corollary}
\newtheorem{proposition}[theorem]{Proposition}
\newtheorem{lemma}[theorem]{Lemma}
\theoremstyle{definition}
\newtheorem{example}[theorem]{Example}
\newtheorem{remark}[theorem]{Remark}
\theoremstyle{remark}
\numberwithin{equation}{section}
\begin{document}
\title{Maps preserving the sum-to-difference ratio}
\date{\today}

\author{Sunil Chebolu}
\address[S.~Chebolu]{Department of Mathematics, Illinois State University,
Normal, IL 61761, USA}
\email{\tt schebol@ilstu.edu}

\author{Apoorva Khare}  \thanks{ A.K.\ was partially supported by a
Shanti Swarup Bhatnagar Award from CSIR (Govt.\ of India).}
\address[A.~Khare]{Department of Mathematics, Indian Institute of
Science, Bangalore 560012, India}
\email{\tt khare@iisc.ac.in}

\author{Anindya Sen}
\address[A.~Sen]{Accountancy \& Finance Department, 
University of Otago, Dunedin 9016, NZ}
\email{\tt anindya.sen@otago.ac.nz}

\begin{abstract}
For a field $\mathbb{F}$, what are all functions $f \colon \mathbb{F}
\rightarrow \mathbb{F}$ that satisfy the functional equation $f \left(
(x+y)/(x-y) \right) = (f(x) + f(y))/(f(x) - f(y))$ for all $ x \neq y$ in
$\mathbb{F}$? We solve this problem for the fields $\mathbb{Q},
\mathbb{R}$, and a class of its subfields that includes the real
constructible numbers, the real algebraic numbers, and all quadratic
number fields. We also solve it over the complex numbers, and on
any subfield of $\mathbb{R}$, if $f$ is continuous over the reals. The
proofs involve a mix of algebra in all fields, analysis over the real
line, and some topology in the complex plane.
\end{abstract}

\subjclass[2020]{39B22, 39B52, 12F05}

\maketitle

\section{Introduction}

The problem of finding all functions satisfying a given condition has been a major and fruitful endeavor in mathematics. The best-known case is the area of Differential Equations. Here, functions
are typically assumed to be smooth, and the conditions are phrased in terms of the function and its derivatives.
In contrast, the field of ``Functional Equations'' typically involves imposing an algebraic condition on the function and no additional restrictions (or fairly weak ones) on the class of functions to be considered.

A famous example is the problem of finding all functions $f : \mathbb{R} \to \mathbb{R}$ that satisfy the Cauchy functional equation:
\begin{equation}\label{Ecauchy}
f : \mathbb{R} \to \mathbb{R}, \qquad f(x+y) = f(x) + f(y) \ \  \forall \,x,y \in \mathbb{R}.
\end{equation}

This equation, introduced by Cauchy in the early 19th century, is
often regarded as the ``mother of all functional equations,'' as it
inspired many related equations later studied by mathematicians such as
Jensen, d’Alembert, Abel, and others. Their work helped establish
functional equations as a rich branch of mathematics at the crossroads of
analysis, algebra, and number theory. It is worth noting that these
equations also arise naturally in the physical sciences: Jensen’s
equation captures thermodynamic convexity, d’Alembert’s equation
underlies the wave equation and oscillatory motion, and even the additive
Cauchy equation expresses the principle of linear superposition,
fundamental to mechanics, electricity, and Fourier analysis.

It is well known that the only continuous solutions to~\eqref{Ecauchy}
are the linear maps $f(x)=cx $, where $c=f(1)$ -- an instructive exercise
for a freshman math major. However, relaxing the continuity condition
leads to extremely wild\footnote{We quickly mention how to ``construct'' all
solutions, using the Axiom of Choice. Let $\mathcal{B}$ be a Hamel
$\mathbb{Q}$-basis of $\mathbb{R}$ including $1$, and choose any function
$f_0 : \mathcal{B} \to \mathbb{R}$. Then extending $f_0$ to all of
$\mathbb{R}$ by $\mathbb{Q}$-linearity gives a solution, and most of
these are not linear (e.g.\ if $f_0(r) \neq f_0(1) r$ for some $r \in
\mathcal{B} \setminus \{ 1 \}$).}
solutions. For instance, any solution to the Cauchy equation that is
discontinuous at even a single point is unbounded on every interval in
$\mathbb{R}$ and has a graph which is dense in $\mathbb{R}^2$! 

An intriguing aspect of functional equations is the fact that the
solution set can change drastically with an innocuous modification of the
equation. For instance, if we modify the Cauchy functional equation to
$f(x + y) = f(x) - f(y)$, then it is an easy exercise to show that the only
solution is the function which is identically zero!

A parallel small modification of the Cauchy functional equation is
\[
f(x-y) = f(x)-f(y), \ \  \forall\; x,y \in \mathbb{R}.
\]
It is easy to see that this leads us back to the Cauchy equation. (Just
write $x = (x-y)+y$, then rename the variables.)

\subsection*{The problem.}

The above leads us to a natural question: what if we combine both the
additive and subtractive properties into one functional equation? We pose
the problem over an arbitrary field $\mathbb{F}$:

\textit{Determine all maps  $f  \colon  \mathbb{F} \to \mathbb{F}$ that
satisfy the functional equation}
\begin{equation}\label{Esumdiff}
f \left( \frac{x+y}{x-y} \right) = \frac{f(x) + f(y)}{f(x) - f(y)},
\ \  \forall \; x \neq y \in \mathbb{F}.
\end{equation}

In other words, what are all self-maps on $\mathbb{F}$ that preserve the
ratio of sum to difference? For brevity of exposition, we will henceforth
refer to functions that satisfy ~\eqref{Esumdiff} as SD maps -- an
abbreviation for ``Sum-Difference ratio preserving maps.''

In stark contrast to the Cauchy equation, we are able to prove -- without
assuming continuity -- that the only SD map over $\mathbb{R}$ is the
identity map $f(x) \equiv x$. 

In fact, we show that the same result holds for SD maps over
$\mathbb{Q}$, as well as all Euclidean subfields of $\mathbb{R}$, such as
the field of constructible numbers (see just before and after
Theorem~\ref{Euclidean}).

We then extend our solutions to the complex plane, $\mathbb{C}$, in two
different ways, culminating in the result that the only $\mathbb{R}$
preserving SD maps over $\mathbb{C}$ are the identity and conjugate maps.
Here and below, for a subfield $\mathbb{F}$ of $\mathbb{C}$, we say that
a map $f : \mathbb{F} \to \mathbb{F}$ is \textit{$\mathbb{R}$ preserving}
if $f( \mathbb{F} \cap \mathbb{R}) \subseteq \mathbb{F} \cap
\mathbb{R}$.

This striking result suggests a possible connection between SD maps over
any field, $\mathbb{F}$, and the field automorphisms of $\mathbb{F}$ -- a
connection which we investigate over other well-known fields such as
$\mathbb{Q}(i)$, the field of Gaussian rationals. Along the way, we prove
several corollaries imposing further conditions on our solutions. We also
provide counterexamples for some natural conjectures.

Our main results can be summarized in the following theorems.

\begin{theorem}
\label{SDgeneral}
    Let $\mathbb{F}$ be a field and $f \colon \mathbb{F} \to \mathbb{F} $ be an SD map. Then:
\begin{enumerate}
    \item $f$ must be the identity map if $\mathbb{F}$ is $\mathbb{Q}$, $\mathbb{R}$, or any Euclidean subfield of $\mathbb{R}$.
    \item $f$ must be the identity map or the conjugate map if $\mathbb{F}$ is any quadratic extension of $\mathbb{Q}$.
\end{enumerate}
\end{theorem}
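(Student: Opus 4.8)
The plan is to first extract strong structural constraints shared by all three cases, reducing everything to a common normalization, and only then to specialize.

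\textbf{Normalization and multiplicativity.} First I would note that $f$ must be injective, since the right-hand side of \eqref{Esumdiff} requires $f(x) \neq f(y)$ whenever $x \neq y$. Setting $y = 0$ gives $f(1) = \frac{f(x)+f(0)}{f(x)-f(0)}$ for all $x \neq 0$; if $f(0) \neq 0$ or $f(1) \neq 1$ this forces $f$ to be constant, contradicting injectivity, so $f(0) = 0$ and $f(1) = 1$. Putting $x = -y$ gives $f(-y) = -f(y)$, so $f$ is odd. The decisive step is multiplicativity: let $g(t) = \frac{t+1}{t-1}$, an involution satisfying $g\!\left(\frac{x+y}{x-y}\right) = \frac{x}{y}$. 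Taking $y = 1$ in \eqref{Esumdiff} shows $f \circ g = g \circ f$, and applying $g$ to both sides of \eqref{Esumdiff} together with this commutation yields $f(x/y) = f(x)/f(y)$, hence $f(xy) = f(x) f(y)$ for all $x,y$. Feeding multiplicativity back into \eqref{Esumdiff} gives the equivalent \emph{additive} identity
\[
f(x+y)\bigl(f(x) - f(y)\bigr) = f(x-y)\bigl(f(x) + f(y)\bigr), \qquad x \neq y,
\]
which I will call $(\star)$ and which carries the rest of the argument.

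\textbf{The subfield $\mathbb{Q}$ (in any characteristic-$0$ field).} Writing $\alpha = f(2)$ and using the $y = 1$ case of $(\star)$ as a recursion $f(n+1)\bigl(f(n)-1\bigr) = f(n-1)\bigl(f(n)+1\bigr)$ together with multiplicativity (so $f(4) = \alpha^2$, $f(6) = \alpha f(3)$, and so on), I would compute $f(3),\dots,f(6)$ in $\alpha$ in two independent ways. The resulting consistency condition factors as $(\alpha - 2)(\alpha^2 + 1) = 0$. Over $\mathbb{Q}$ (indeed any real field) $\alpha^2 + 1 \neq 0$; and even over an imaginary field the root $\alpha^2 = -1$ is excluded, since it would give $f(4) = \alpha^2 = -1 = f(-1)$, contradicting injectivity. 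Hence $f(2) = 2$ always, the recursion gives $f(n) = n$ by induction, and multiplicativity extends this to $f(q) = q$ for all $q \in \mathbb{Q}$. This settles part~(1) for $\mathbb{Q}$ and, crucially, shows $f$ fixes $\mathbb{Q}$ pointwise in \emph{every} case.

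\textbf{Euclidean subfields of $\mathbb{R}$ and $\mathbb{R}$ itself.} Here I would upgrade multiplicativity to monotonicity. Since $\mathbb{F}$ is Euclidean, every positive element is a square, so $f(x) = f(\sqrt{x})^2 \geq 0$ for $x > 0$; with injectivity this gives $f(x) > 0$ for $x > 0$, and oddness extends sign-preservation to all of $\mathbb{F}$. Next, for $t > 1$ the recursion has $f(t\pm 1), f(t) > 0$, which forces $f(t) > 1$. Multiplicativity then makes $f$ strictly increasing, since for $0 < a < b$ we have $f(b)/f(a) = f(b/a) > 1$. A strictly increasing map fixing the dense set $\mathbb{Q}$ must be the identity, completing part~(1).

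\textbf{Quadratic extensions.} Write $\mathbb{F} = \mathbb{Q}(\theta)$ with $\theta^2 = d \in \mathbb{Q}$. From $f(\theta)^2 = f(d) = d$ I get $f(\theta) = \epsilon\theta$ with $\epsilon \in \{\pm 1\}$. For $w = a + b\theta$ with $b \neq 0$, applying $(\star)$ with $x = a$, $y = b\theta$ together with the norm identity $f(a+b\theta)f(a-b\theta) = f(a^2 - b^2 d) = a^2 - b^2 d$ gives $f(w)^2 = (a + \epsilon b\theta)^2$, so $f(w) = \sigma(a,b)(a+\epsilon b\theta)$ with $\sigma(a,b) \in \{\pm 1\}$; multiplicativity makes $\sigma$ a homomorphism $\mathbb{F}^\times \to \{\pm 1\}$ that is trivial on $\mathbb{Q}^\times$ and on $\theta$. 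The crux is to prove $\sigma \equiv 1$: I would apply $(\star)$ with $x = a + b\theta$ and $y \in \mathbb{Q}$, expand in the $\mathbb{Q}$-basis $\{1, z\}$ where $z = a + \epsilon b\theta$ (using $z^2 = 2az - (a^2 - b^2 d)$), and read off coefficient relations forcing $\sigma(a,b) = 1$ for all but finitely many $a$ on each line $b = \mathrm{const}$; the exceptional points are then cleared using a second shift $y \in \mathbb{Q}$. This yields $f(a+b\theta) = a + \epsilon b\theta$, which is the identity when $\epsilon = 1$ and conjugation when $\epsilon = -1$. Both are field automorphisms, hence genuine solutions of \eqref{Esumdiff}, so the list is exhaustive. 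The main obstacle throughout is exactly this sign bookkeeping: relation $(\star)$ only ever determines $f$ up to a sign, and closing that gap by reconciling the multiplicative and additive constraints is where the real work lies.
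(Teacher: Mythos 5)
Your proposal is correct, and for most of the theorem it follows the paper's route: the same preliminary lemmas (injectivity, $f(0)=0$, $f(1)=1$, oddness, multiplicativity -- your derivation via conjugating by the involution $g(t)=(t+1)/(t-1)$ is a slicker packaging of the paper's $y=kx$ substitution), the same recurrence-plus-consistency argument over $\mathbb{Q}$ (the paper uses $f(8)=f(2)f(4)$ to get $u^2(u-1)(u-2)=0$, while your $f(6)$-two-ways check gives $(\alpha-2)(\alpha^2+1)=0$ and needs your extra injectivity argument $f(4)=\alpha^2=-1=f(-1)$ to kill the complex roots -- both work), and the same positivity/monotonicity/density argument for Euclidean subfields and $\mathbb{R}$. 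The genuine divergence is in the quadratic case. Both proofs first establish $f(a+b\sqrt{d})=\pm(a+\epsilon b\sqrt{d})$ (your route via $(\star)$ with $x=a$, $y=b\sqrt{d}$ and the norm identity is a clean variant of the paper's $z/\overline{z}$ computation in Proposition~\ref{Psunil}), but the sign resolution differs: the paper proves an ``AP Lemma'' (Lemma~\ref{Llattice}), pins down $f(1+\sqrt{d})$ by deriving a contradiction from $f(2+\sqrt{d})$, and then propagates over the lattice $\{m+n\sqrt{d}\}$ line by line; you instead encode the signs as a character $\sigma\colon\mathbb{F}^\times\to\{\pm1\}$ trivial on $\mathbb{Q}^\times$ and on $\sqrt{d}$, and kill it by comparing coefficients in the $\mathbb{Q}$-basis $\{1,z\}$ after a rational shift $y=q$. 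I checked this computation: the constant-term comparison forces $\sigma(a+q,b)=\sigma(a-q,b)$ whenever $q^2\neq-\sigma(a,b)(a^2-b^2d)$, and the $z$-coefficient then forces $(\sigma(a,b)-1)q=0$, so any admissible nonzero $q$ (all but at most two) gives $\sigma(a,b)=1$ directly -- your argument is in fact slightly more uniform than you describe, needing no second pass over exceptional points. The trade-off: the paper's AP Lemma is more elementary and reusable as a standalone tool, while your character argument handles each element in one shot without lattice induction.
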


\begin{theorem}\label{SDRpreserving}
    Let $\mathbb{F}$ be a subfield of $\mathbb{C}$ and $f \colon \mathbb{F} \to \mathbb{F} $ be an $\mathbb{R}$ preserving SD map. Then:
\begin{enumerate}
    \item $f$ must be the identity map or the conjugate map if $\mathbb{F}$ is $\mathbb{C}$.
    \item $f$ must be the identity map or the conjugate map if $\mathbb{F}$ is the quadratic closure of any Euclidean subfield of $\mathbb{R}$.
\end{enumerate}
\end{theorem}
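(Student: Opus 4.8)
The plan is to handle both parts uniformly by reducing to the Euclidean case of Theorem~\ref{SDgeneral}(1). Write $\mathbb{F} = \mathbb{C}$ in case (1), and $\mathbb{F} = K(i)$ in case (2), where $K$ is a Euclidean subfield of $\mathbb{R}$; I would first record that the quadratic closure of such a $K$ is exactly $K(i)$, since $K$ being Euclidean makes $K(i)$ already quadratically closed (every $a+bi$ has $\sqrt{a^2+b^2}\in K$, whence a square root in $K(i)$). Put $R := \mathbb{F}\cap\mathbb{R}$, so $R=\mathbb{R}$ in case (1) and $R=K$ in case (2); in both cases $R$ is Euclidean. Since $f$ is $\mathbb{R}$ preserving, $f(R)\subseteq\mathbb{R}$, and as $f(R)\subseteq\mathbb{F}$ this forces $f(R)\subseteq\mathbb{F}\cap\mathbb{R}=R$; thus $f|_R$ is an SD map on $R$, so Theorem~\ref{SDgeneral}(1) gives $f|_R=\mathrm{id}$. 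I would freely use the one-line preliminary facts for any SD map: $f(0)=0$ and $f(1)=1$ (set $y=0$), $f$ is odd (set $y=-x$, so the left side is $f(0)=0$), and $f(1/z)=1/f(z)$ for $z\neq 0$ (apply $f$ to $\tfrac{x+(-y)}{x-(-y)}=\bigl(\tfrac{x+y}{x-y}\bigr)^{-1}$ and invoke oddness). Combining the last two at $z=i$, where $1/i=-i$, yields $1/f(i)=f(-i)=-f(i)$, hence $f(i)^2=-1$ and $f(i)\in\{i,-i\}$.

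Next I would pin down $f$ on the unit circle $U:=\{\,u\in\mathbb{F}:u\bar{u}=1\,\}$. For $r\in R$ set $u_r:=\tfrac{i+r}{i-r}=\tfrac{1-r^2}{1+r^2}-\tfrac{2r}{1+r^2}\,i\in U$; as $r$ ranges over $R$ this sweeps out all of $U$ except the real point $-1$ (solving for $r$ gives $r=-d/(1+c)\in K$ for $u=c+di\neq -1$). The functional equation with $x=i,\,y=r$, together with $f|_R=\mathrm{id}$, gives $f(u_r)=\tfrac{f(i)+r}{f(i)-r}$. A direct computation shows this equals $u_r$ when $f(i)=i$ and equals $\overline{u_r}$ when $f(i)=-i$; with $f(\pm1)=\pm1$ this shows $f$ is the identity (resp.\ conjugation) on all of $U$.

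The decisive step is then $R$-homogeneity, and the hard part is extracting it from the nonlinear relation. The trick I would use is the clean operator identity $M_1\circ M_s=(z\mapsto z/s)$, where $M_t(z):=\tfrac{z+t}{z-t}$: indeed $M_1(M_s(z))=\tfrac{(z+s)+(z-s)}{(z+s)-(z-s)}=z/s$. Since $f$ commutes with each $M_t$ for $t\in R$ (this is precisely the functional equation with $y=t$, using $f(t)=t$), it commutes with $M_1\circ M_s$, so $f(z/s)=f(z)/s$, i.e.\ $f(\lambda z)=\lambda f(z)$ for all $\lambda\in R^{\times}$ and all $z\in\mathbb{F}$. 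Now for $z=a+bi$ with $a,b\in R$ put $\rho:=\sqrt{a^2+b^2}$. \emph{Here the Euclidean hypothesis on $R$ is essential:} $a^2+b^2\geq 0$ lies in $R$, so $\rho\in R$ and $u:=z/\rho\in U$, whence $f(z)=\rho\,f(u)$ equals $\rho u=z$ if $f(i)=i$ and $\rho\overline{u}=\bar{z}$ if $f(i)=-i$. This yields $f=\mathrm{id}$ or $f=(\text{conjugation})$ and settles both parts at once.

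I expect the main obstacle to be precisely the passage to genuine $R$-homogeneity: everything collapses once one spots $M_1\circ M_s(z)=z/s$, which turns the SD relation into ordinary scaling. A secondary subtlety, specific to part (2), is that the polar decomposition $z=\rho u$ must remain inside $\mathbb{F}=K(i)$; this is exactly what the Euclidean hypothesis guarantees (so that $\rho=\sqrt{a^2+b^2}\in K$), and without it the knowledge of $f$ on $U$ and $R$-homogeneity would not combine to cover all of $\mathbb{F}$. For part (1) I would also note an alternative finish that is the ``topology in the complex plane'' hinted at in the introduction: $f$ commutes with every $M_t$, with $\nu(z)=-z$, and with $\iota(z)=1/z$, and these generate a subgroup of the real Möbius group $PGL_2(\mathbb{R})$ that acts transitively on the upper half-plane; since $f$ fixes $i$, transitivity forces $f=\mathrm{id}$ on $\mathbb{H}$ and dually on the lower half-plane.
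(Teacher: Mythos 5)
Your proof is correct, and its skeleton matches the paper's: reduce to the real Euclidean case (Theorems~\ref{Rmaps} and~\ref{Euclidean}) to get $f|_R=\mathrm{id}$, pin down $f(i)=\pm i$, determine $f$ on the unit circle, then finish by the polar decomposition $z=\rho u$ with $\rho=\sqrt{a^2+b^2}\in R$ -- which is exactly where the paper also invokes the Euclidean hypothesis. The genuine difference is how the unit circle is handled. The paper (Proposition~\ref{Cbasics}, Theorem~\ref{CRpres}, and the unnamed final theorem of Section~4) writes a unit-modulus element as $w/\bar w$ with $w=e^{i\theta/2}$, which for part~(2) requires first proving the half-angle closure statement of Proposition~\ref{Fbasics}(1) via $\cos(\theta/2)=\sqrt{(1+\cos\theta)/2}$. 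Your rational parametrization $u_r=\tfrac{i+r}{i-r}$ with $r=-d/(1+c)\in R$ covers $U\setminus\{-1\}$ without extracting any square roots, so it stays inside $K(i)$ automatically and lets you treat $\mathbb{C}$ and $K(i)$ in one stroke; this is a small but real simplification. Your Möbius identity $M_1\circ M_s=(z\mapsto z/s)$ is a nice self-contained route to $R$-homogeneity, but it is redundant given the paper's Preliminaries, where full multiplicativity $f(xy)=f(x)f(y)$ is already proved and immediately yields $f(\rho u)=\rho f(u)$; likewise $f(i)^2=f(i^2)=-1$ is quicker than your reciprocal argument (for which you should at least note that every nonzero $w$ equals $\tfrac{x+y}{x-y}$ for suitable $x\neq\pm y$, e.g.\ $x=w+1$, $y=w-1$). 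Finally, your reduction $f(R)\subseteq\mathbb{F}\cap\mathbb{R}=R$ is slightly cleaner than the paper's, which instead remarks after Theorem~\ref{Euclidean} that the Euclidean argument only needs $f$ to take values in $\mathbb{R}$.
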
 

\begin{theorem} \label{SDcont}
Let $\mathbb{F}$ be a subfield of $\mathbb{C}$ and $f
\colon \mathbb{F} \to \mathbb{F} $ be a continuous SD map
(see Remark~\ref{Rcont}). Then:
  \begin{enumerate}
      \item $f$ must be the identity map for $\mathbb{F}$ any subfield of $\mathbb{R}$.
      \item $f$ must be the identity or conjugate map for $\mathbb{F}$ any subfield of $\mathbb{C}$ which contains the Gaussian rationals $\mathbb{Q}(i)$.
  \end{enumerate}
\end{theorem}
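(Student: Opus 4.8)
The plan is to reduce everything to the behaviour of $f$ on a countable dense subfield and then invoke continuity. As shown en route to Theorem~\ref{SDgeneral}, any SD map over a characteristic-zero field is injective, odd, and completely multiplicative, and fixes $\mathbb{Q}$ pointwise (so $f(0)=0$, $f(1)=1$, and $f(1/t)=1/f(t)$). Two consequences of~\eqref{Esumdiff} drive the argument. Rescaling $(x,y)\mapsto(\lambda x,\lambda y)$ in~\eqref{Esumdiff} forces complete multiplicativity; substituting this back and clearing denominators yields the master identity $f(x+y)\,[f(x)-f(y)]=f(x-y)\,[f(x)+f(y)]$ for all $x\neq\pm y$, which I will call $(\ast)$. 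Taking $y=1$ in $(\ast)$ gives the three-term recursion $f(x+1)\,[f(x)-1]=f(x-1)\,[f(x)+1]$, which propagates known values of $f$ along the step-$1$ lattice. Part (1) is then immediate: if $\mathbb{F}\subseteq\mathbb{R}$, then $\mathbb{Q}$ is dense in $\mathbb{F}$ in the subspace topology, $f$ agrees with $\mathrm{id}$ on $\mathbb{Q}$, and $f$ is continuous, whence $f=\mathrm{id}$ on $\mathbb{F}$. The same density argument shows, for any $\mathbb{F}\subseteq\mathbb{C}$, that a continuous SD map fixes every real element of $\mathbb{F}\cap\mathbb{R}$.

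For part (2), I first pin down $f(i)$: multiplicativity gives $f(i)^2=f(-1)=-1$, so $f(i)=\epsilon i$ for some $\epsilon\in\{+1,-1\}$. Let $c$ denote the identity when $\epsilon=+1$ and complex conjugation when $\epsilon=-1$; in either case $c$ is a continuous field automorphism. Since $\mathbb{Q}(i)$ is dense in every subfield $\mathbb{F}\subseteq\mathbb{C}$ containing it, and $c$ is continuous, it suffices to prove the purely algebraic claim $f=c$ on $\mathbb{Q}(i)$; continuity then forces $f=c$ on all of $\mathbb{F}$.

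To establish $f|_{\mathbb{Q}(i)}=c$, I would work first on the Gaussian integers. Multiplicativity gives the norm identity $f(a+bi)\,f(a-bi)=f(a^2+b^2)=a^2+b^2$, while $(\ast)$ with $x=a$, $y=bi$ (using $f(bi)=f(b)f(i)=\epsilon b i$) gives the ratio $f(a+bi)/f(a-bi)=(a+\epsilon bi)/(a-\epsilon bi)$; multiplying the two shows $f(a+bi)=\pm(a+\epsilon bi)$ for every Gaussian integer. It remains to fix the signs. The values on the two axes are already unambiguous, namely $f(a)=a$ and $f(bi)=\epsilon bi$, and a single further instance of $(\ast)$ (say $x=1+i$, $y=1$) forces $f(1+i)=1+\epsilon i$ by matching imaginary parts, the opposite sign being algebraically impossible. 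Seeding with these values and running the recursion $(\ast)$ with $y=1$ in the real direction together with its analogue $f(z+i)=f(z-i)\,(f(z)+\epsilon i)/(f(z)-\epsilon i)$ (the case $y=i$) in the imaginary direction propagates \emph{definite} values, with no sign ambiguity once two consecutive seeds are fixed; a two-dimensional induction then gives $f(a+bi)=a+\epsilon bi=c(a+bi)$ on all of $\mathbb{Z}[i]$. Injectivity guarantees the denominators never vanish, since they do so only at the seed points $z=\pm1,\pm i$. Finally, writing $z=(p+qi)/n$ with $n\in\mathbb{Z}$ and applying multiplicativity upgrades the conclusion to $f(z)=f(p+qi)/n=c(z)$ for all $z\in\mathbb{Q}(i)$, and continuity finishes part (2).

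The main obstacle, and the reason continuity alone cannot close part (2), is exactly this sign determination. Because $\mathbb{Q}(i)$ (and a general $\mathbb{F}$) is totally disconnected, the ratio $f(z)/c(z)\in\{\pm1\}$ could a priori be a nonconstant continuous multiplicative function, so no topological connectedness argument rules out a global sign flip. It is the rigidity of the functional equation, forcing imaginary parts to agree in $(\ast)$, that collapses the sign; only after this algebraic input does the soft density-plus-continuity argument cleanly deliver $f=c$ on $\mathbb{F}$.
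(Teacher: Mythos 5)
Your proposal is correct and follows essentially the same route as the paper: part (1) is the density-of-$\mathbb{Q}$ argument of Corollary~\ref{Qcont}, and part (2) re-derives Proposition~\ref{Psunil} and Theorem~\ref{Tquad} in the case $d=-1$ (the dichotomy $f(a+bi)=\pm(a+\epsilon b i)$, the sign determination at $1+i$, and the arithmetic-progression propagation over $\mathbb{Z}[i]$) before finishing with density of $\mathbb{Q}(i)$ and continuity. The only cosmetic difference is that you inline the AP Lemma (Lemma~\ref{Llattice}) as explicit recursions in the real and imaginary directions rather than quoting it.
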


\begin{remark}\label{Rcont}
Throughout this work, when we discuss continuous SD-maps $: \mathbb{F}
\to \mathbb{F}$, we will assume that $\mathbb{F}$ is a subfield of
$\mathbb{C}$, equipped with the subspace topology inherited from
$\mathbb{C}$.
\end{remark}

Remarkably, all of the statements above hold if we replace the term ``SD
map'' with ``field automorphism''. This prompts the very natural
question: \emph{Are field automorphisms the only solutions to our
functional equation?} We show this is not the case by giving explicit SD
maps that are not field automorphisms. These counterexamples suggest that
our functional equation belongs to a distinct class and, to the best of
our knowledge, has not been previously studied in the literature. 

Our results and methods reveal the rich potential of our functional
equation and connect to several tools and techniques from algebra,
analysis, and even some topology.  We end the paper with some questions
inviting further study.

\vskip 5mm
\noindent 
\textbf{Acknowledgements} The authors wish to thank the anonymous referees for carefully going through the article and for their suggestions, which helped improve the presentation of the article and clarify it further.

\section{Preliminaries}

We begin by proving some properties of SD maps that will be used
throughout the sequel.

\begin{proposition}
Let $\mathbb{F}$ be a field of characteristic zero. Let $f$ be an SD map on $\mathbb{F}$. That is,  $f \colon \mathbb{F} \rightarrow \mathbb{F}$ is such that
\[
f \left( \frac{x+y}{x-y} \right) = \frac{f(x) + f(y)}{f(x) - f(y)},
\qquad \forall \; x \neq y \in \mathbb{F}.
\]
Then we have the following. 
\begin{enumerate}
    \item $f$ is injective.
    \item $f(0) = 0$ and $f(1) = 1$.
     \item $f$ is an odd function: $f(-x) = -f(x)$ for all $x$ in $\mathbb{F}$. 
    \item $f$ is multiplicative: $f(xy) = f(x)f(y)$ for all $x$ and $y$ in $\mathbb{F}$.
   
\end{enumerate}    
\end{proposition}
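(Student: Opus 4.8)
The plan is to establish the four properties in the order listed, since each later part leans on the earlier ones. First I would dispatch injectivity, which is essentially forced by well-definedness: the right-hand side of \eqref{Esumdiff} only makes sense when $f(x)\neq f(y)$, and since the identity is posited for \emph{every} $x\neq y$, we must have $f(x)\neq f(y)$ whenever $x\neq y$. For the values at $0$ and $1$, I would feed the equation its two degenerate inputs. Substituting $y=0$ (any $x\neq 0$) makes the left argument $\frac{x+0}{x-0}=1$, so $f(1)=\frac{f(x)+f(0)}{f(x)-f(0)}$ for all $x\neq 0$; clearing denominators gives $f(x)\,(f(1)-1)=f(0)\,(f(1)+1)$. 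If $f(1)\neq 1$ the right side would be a fixed constant, forcing $f$ to be constant on the infinite set $\mathbb{F}\setminus\{0\}$ (characteristic zero, so $\mathbb{F}\supseteq\mathbb{Q}$), contradicting injectivity. Hence $f(1)=1$, whereupon the same relation collapses to $2f(0)=0$, i.e.\ $f(0)=0$.

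For oddness I would substitute $y=-x$, which is admissible for $x\neq 0$ in characteristic zero. The left argument becomes $\frac{x+(-x)}{x-(-x)}=0$, so $0=f(0)=\frac{f(x)+f(-x)}{f(x)-f(-x)}$, which immediately yields $f(x)+f(-x)=0$. Together with the trivial case $x=0$ this gives $f(-x)=-f(x)$ throughout $\mathbb{F}$.

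Multiplicativity is the substantive step, and I expect it to be the main obstacle. The idea is to recast the equation entirely in terms of the single ratio $s=x/y$. Introduce the Möbius map $g(t)=\frac{t+1}{t-1}$, which is injective (in fact an involution). Dividing numerator and denominator of the two sides of \eqref{Esumdiff} by $y$ and by $f(y)$ respectively -- both nonzero for $y\neq 0$, using $f(0)=0$ together with injectivity -- rewrites the functional equation as $f(g(x/y))=g(f(x)/f(y))$. The left-hand side depends only on $s=x/y$, so since $g$ is injective the quantity $f(x)/f(y)$ also depends only on $x/y$. Concretely, any two admissible pairs with the same ratio $s\neq 1$ produce the same value of $f(x)/f(y)$, and taking the representative $y=1$ (legitimate exactly because $s\neq 1$) identifies that common value as $f(s)$. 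This delivers $f(sy)=f(s)f(y)$ for all $y\neq 0$ and $s\neq 1$; the leftover cases $y=0$ and $s=1$ follow at once from $f(0)=0$ and $f(1)=1$, giving full multiplicativity.

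The hard part will be the bookkeeping in this last step: carefully tracking the admissibility constraints ($x\neq y$ and nonvanishing denominators) while extracting the principle that $f(x)/f(y)$ is a function of $x/y$ alone, and then confirming that the excluded boundary cases cause no trouble. The earlier parts are routine once the two degenerate substitutions $y=0$ and $y=-x$ are identified.
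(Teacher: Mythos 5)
Your proposal is correct and follows essentially the same route as the paper: injectivity from well-definedness, the substitutions $y=0$ and $y=-x$ for parts (2) and (3), and for multiplicativity the observation that the equation forces $f(x)/f(y)$ to depend only on $x/y$, pinned down by the representative $y=1$. Your packaging of the last step via the injective M\"obius map $g(t)=\frac{t+1}{t-1}$ is just a cleaner phrasing of the paper's explicit manipulation $1+\frac{2}{f(kx)/f(x)-1}=1+\frac{2}{f(k)-1}$, not a different argument.
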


\begin{proof}
\begin{enumerate}

\item If $x \ne y$ but $f(x) = f(y)$, then the LHS of our functional equation has some value in $\mathbb{F}$, but the RHS is undefined. Hence, $f$ is injective.
    \item Setting $y=0$ and letting $x \ne 0$ in our functional equation, we see that \[f(1) = \frac{f(x)+f(0)}{f(x)-f(0)}.\]
    After simplifying, we get $ f(x)(f(1)-1) = f(0)(f(1)+1)$.
    If $f(1) \ne 1$, then \[f(x) = \frac{f(0)(f(1)+1)}{f(1)-1} \qquad  \forall \; x \ne 0 \in \mathbb{F}.\] 
    But $\mathbb{F}$ contains at least two distinct nonzero values of $x$.
    This means $f(1) =1$ (by injectivity).  Since $f(1)=1$, the equation \[ f(x)(f(1)-1) = f(0)(f(1)+1) = 2f(0)\]
    gives us  $f(0) =0$.
    \item Setting $y=-x$ where $x \ne 0$ in our functional equation gives
    \[ f(0) = 0 = \frac{f(x)+f(-x)}{f(x)-f(-x)}.\]
    This means $f(x) = -f(-x)$ for any non-zero $x$. The equation also holds for $x=0$. This shows that $f$ is odd.
    \item Set $y = kx$, where $k \ne 1$ and $x \ne 0$ in our functional equation gives:
    \begin{align*}
        &\ f \left(\frac{1+k}{1-k}\right) = f\left(\frac{x+kx}{x-kx}\right) = \frac{f(x)+f(kx)}{f(x)-f(kx)}\\
        \Rightarrow &\ f\left( \frac{k+1}{k-1}\right) = \frac{f(kx)+f(x)}{f(kx)-f(x)}  =1 + \frac{2f(x)}{f(kx)-f(x)} = 1 + \frac{2}{\left(\frac{f(kx)}{f(x)} -1\right)} .
    \end{align*}
    
    Notice that the left-hand side is independent of $x$, so the right-hand side must be too. That is, the RHS for general $x$ equals its value for $x=1$. As $f(1)= 1$, we get:
\begin{align*}
&\ \frac{2}{(\frac{f(kx)}{f(x)} -1)} = f\left(\frac{k+1}{k-1}\right) - 1
= \frac{2}{f(k) - 1}\\
\implies &\ \frac{f(kx)}{f(x)} - 1 = f(k) - 1 \implies
f(kx) = f(k)f(x)
\end{align*}
for all $k\neq 1$ and $x \neq 0$.  However, when $x=0$ or $k=1$, this
equation is clearly true. Thus, we get, after relabeling,  $f(xy) =
f(x)f(y)$ for all $x$ and $y$. 
\end{enumerate}
\end{proof}

\begin{remark}
    Note that the results also hold for any field $\mathbb{F}$ with ${\rm char}(\mathbb{F}) \neq 2$. If ${\rm char}(\mathbb{F}) = 2$, then $2 = 0$, so we cannot prove that $f(0) = 0$ or that $f$ is multiplicative. 
\end{remark}

We see from the above proposition that 
the restriction of $f$ to $\mathbb{F}^\times$ -- the multiplicative group of $\mathbb{F}$ -- is an injective group homomorphism. Moreover, since $f$ is an odd function, we also infer that $f(-1) = -1$.
\section{Subfields of $\mathbb{R}$}
Since any field of characteristic $0$ contains $\mathbb{Q}$ as a subfield,  to understand SD maps over other fields such as $\mathbb{R}$, $\mathbb{C}$, and $\mathbb{Q}(i)$ it is important to understand $SD$ maps over $\mathbb{Q}$. The following theorem is the foundation upon which all the subsequent results are built.

\begin{theorem} \label{QRESULT}
    Let $f \colon \mathbb{Q} \rightarrow \mathbb{Q}$ be an SD map. Then $f(x) = x$ for all $x$ in $\mathbb{Q}$.
\end{theorem}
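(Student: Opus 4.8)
The plan is to leverage the four properties established in the preceding Proposition — injectivity, $f(0)=0$, $f(1)=1$, oddness, and above all multiplicativity — and to convert the SD equation into an additive-type recursion on the integers. The crucial first move is to use multiplicativity to simplify the left-hand side of \eqref{Esumdiff}: since $f(1/u) = 1/f(u)$ for $u \neq 0$ (because $f(u)f(1/u) = f(1) = 1$, and $f(u) \neq 0$ by injectivity), we have $f\!\left(\frac{x+y}{x-y}\right) = \frac{f(x+y)}{f(x-y)}$. Hence \eqref{Esumdiff} is equivalent to
\[
\frac{f(x+y)}{f(x-y)} = \frac{f(x)+f(y)}{f(x)-f(y)}, \qquad \forall\, x \neq y,
\]
or, after cross-multiplying,
\[
f(x+y)\,\bigl(f(x)-f(y)\bigr) = f(x-y)\,\bigl(f(x)+f(y)\bigr). \qquad (\star)
\]
This reformulation is what makes the problem tractable: it lets me compute $f$ at a sum $x+y$ from its values at $x$, $y$, and $x-y$, without $x+y$ having to be realized as a ratio of the form $\frac{x+y}{x-y}$.

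Next I set $t := f(2)$ and write $a_n := f(n)$, computing the first few values as rational functions of $t$. Specializing $y=1$ in $(\star)$ gives the recursion $a_{n+1}(a_n-1) = a_{n-1}(a_n+1)$, while multiplicativity gives $a_4 = t^2$. From $a_1 = 1$ I get $a_3 = \frac{t+1}{t-1}$, and then I can express $a_5$ in two genuinely different ways: once from $(\star)$ with $(x,y)=(4,1)$, giving $a_5 = a_3\cdot\frac{a_4+1}{a_4-1} = \frac{t^2+1}{(t-1)^2}$, and once from the original equation with $(x,y)=(3,2)$ (noting $5 = \frac{3+2}{3-2}$), giving $a_5 = \frac{a_3+t}{a_3-t} = \frac{t^2+1}{-t^2+2t+1}$. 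Equating these and cancelling $t^2+1$ (which never vanishes in $\mathbb{Q}$) yields $(t-1)^2 = -t^2+2t+1$, i.e.\ $2t(t-2) = 0$; since $t = f(2) \neq f(0) = 0$ by injectivity, this forces $t = 2$. A routine induction using the recursion then gives $f(n) = n$ for every positive integer, oddness extends this to all of $\mathbb{Z}$, and finally multiplicativity gives $f(p/q) = f(p)/f(q) = p/q$ for every rational.

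I expect the main obstacle to be recognizing that the naive recursion on consecutive integers is underdetermined: the relation $(\star)$ together with multiplicativity turns out to be automatically self-consistent for an arbitrary value of $t = f(2)$, so that each relation one writes down blindly collapses to an identity in $t$ and yields no constraint. The value $t = 2$ is pinned down only by exploiting the overdetermination inherent in the functional equation — computing a single quantity such as $f(5)$ along two inequivalent paths and forcing them to agree. Identifying such a pair of computations, rather than merely iterating a recursion, is the key idea; everything after $f(2) = 2$ is bookkeeping.
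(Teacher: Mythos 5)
Your proof is correct, and its overall skeleton is the same as the paper's: derive the recursion $f(n+1)(f(n)-1)=f(n-1)(f(n)+1)$ by taking $y=1$, exploit an extra consistency relation to pin down $t=f(2)$, then induct and extend to $\mathbb{Q}$ by oddness and multiplicativity. The one genuine point of difference is the closing relation that forces $t=2$: the paper pushes the recursion all the way to $f(8)$ and imposes the multiplicativity constraint $f(8)=f(2)f(4)$, obtaining $u^2(u-1)(u-2)=0$, whereas you stop at $f(5)$ and compare the recursion's value $\frac{t^2+1}{(t-1)^2}$ with the value $\frac{f(3)+f(2)}{f(3)-f(2)}=\frac{t^2+1}{-t^2+2t+1}$ obtained by applying the SD equation directly to $5=\frac{3+2}{3-2}$. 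Your variant is slightly more economical (fewer terms to compute, and a quadratic $2t(t-2)=0$ rather than a quartic), and your diagnostic remark is apt: both arguments work precisely because the system is overdetermined, so one must find two inequivalent ways to reach the same value. The routine checks (denominators nonvanishing by injectivity, $t^2+1\neq 0$ over $\mathbb{Q}$) all go through, so there is no gap.
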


\begin{proof}

We begin by explicitly computing $f$ at the positive integers in terms of
$f(2)$ and via a recurrence relation for $f(n)$. The key observation is
to set $x = n$ and $y = 1$ in~\eqref{Esumdiff}:
\[
f \left( \frac{n+1}{n-1} \right) = \frac{f(n)+1}{f(n)-1}, \qquad \forall \; 
n \ge 2.
\]

Since $f$ is injective and $n \neq 1$, we have $f(n) \neq 1$. By
multiplicativity, we therefore obtain a recurrence relation for the
$f(n)$:
\begin{equation}\label{Erecur}
f(n+1) = f(n-1) \cdot \frac{f(n)+1}{f(n)-1}, \qquad \forall \; n \ge 2.
\end{equation}

Now using this relation, one can explicitly compute $f(n)$ -- in terms of
$u := f(2)$. This is a straightforward verification (left as an exercise
for the reader) which yields:
\begin{align}
f(0) = 0, \quad f(1) = 1, \quad f(2) = u, \quad f(3) = \frac{u+1}{u-1},
\quad f(4) = u^2,\\
f(5) = \frac{u^2+1}{(u-1)^2}, \qquad
f(6) = u(u^2 - u + 1), \\
f(7) = \frac{u^3 - u^2 + u + 1}{u^3 - 3u^2 + 3u - 1}, \qquad
f(8) = u^2(u^2 - 2u + 2).
\end{align}

Since $f$ is multiplicative, $f(8) = f(2)f(4)$, i.e.,
\[
u^2 (u^2 - 2u + 2) = u^3 \quad \Longleftrightarrow \quad u^2 (u-1)(u-2) =
0.
\]
As $u = f(2)$ and $f$ is injective, $u \neq 0,1$. Thus $f(2) = u = 2$.

Now we claim by induction on $n \geq 2$  that $f(n) = n$. The
induction step follows from~\eqref{Erecur}:
\begin{equation}\label{Einduct}
f(n+1) = f(n-1) \cdot \frac{f(n)+1}{f(n)-1} = (n-1) \cdot \frac{n+1}{n-1}
= n+1.
\end{equation}
As $f$ is multiplicative, $f(p/q) = f(p)/f(q) = p/q$ for all positive integers
$p,q$. Since $f$ is odd, it fixes $\mathbb{Q}$.
\end{proof}

Our proof gives much more; it applies to any SD map between fields of characteristic $0$. We record this as a theorem.

\begin{theorem}\label{Tmult}
Let $f$ be an SD map on a field of characteristic $0$. Then $f$ 
fixes $\mathbb{Q}$ pointwise.
\end{theorem}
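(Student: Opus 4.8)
The plan is to observe that the proof of Theorem~\ref{QRESULT} never genuinely used the fact that the target field was $\mathbb{Q}$: every ingredient it relied upon is available for an arbitrary field $\mathbb{F}$ of characteristic $0$. Since $\mathrm{char}(\mathbb{F}) = 0$, the prime subfield of $\mathbb{F}$ is a copy of $\mathbb{Q}$, so it makes sense to ask whether $f$ fixes it pointwise. All the structural properties established in the opening Proposition -- injectivity, $f(0)=0$, $f(1)=1$, oddness, and multiplicativity -- hold verbatim over any characteristic-$0$ field, and these are precisely the tools invoked in Theorem~\ref{QRESULT}. So I would simply re-run that argument inside $\mathbb{F}$.

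Concretely, setting $x = n$ and $y = 1$ in~\eqref{Esumdiff} and using injectivity (which guarantees $f(n) \neq f(1) = 1$ for $n \geq 2$, hence that every denominator $f(n)-1$ is nonzero) together with multiplicativity yields the same recurrence~\eqref{Erecur}, now read as an identity among elements of $\mathbb{F}$. The closed-form expressions~\eqref{E5}--\eqref{E8} for $f(0), \dots, f(8)$ in terms of $u := f(2)$ are formal consequences of this recurrence with rational coefficients, whose denominators are powers of $u-1 \neq 0$; they therefore continue to hold with $u \in \mathbb{F}$. Comparing $f(8)$ with $f(2)f(4)$ via multiplicativity again produces the single polynomial equation $u^2(u-1)(u-2) = 0$, now in $\mathbb{F}$. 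Injectivity rules out $u = 0$ and $u = 1$, the induction step~\eqref{Einduct} then gives $f(n) = n$ for all positive integers $n$ (interpreted inside $\mathbb{F}$), multiplicativity extends this to $f(p/q) = p/q$ for positive rationals, and oddness covers the negatives, so $f$ fixes $\mathbb{Q}$.

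The only place where the choice of field matters at all -- and hence the sole point deserving care rather than a genuine obstacle -- is the extraction of $u$ from $u^2(u-1)(u-2) = 0$. In the original proof over $\mathbb{Q}$ this is immediate; over a general $\mathbb{F}$ it rests on two facts, both of which hold here: $\mathbb{F}$ is an integral domain, so the product vanishes only when $u$ equals one of the roots $0,1,2$; and $\mathrm{char}(\mathbb{F}) = 0$, so these three elements of $\mathbb{Q} \subseteq \mathbb{F}$ are pairwise distinct, making the case analysis meaningful. Once this is noted, the theorem follows with no essential new content beyond the bookkeeping of carrying the earlier computation through in $\mathbb{F}$.
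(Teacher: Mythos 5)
Your proposal is correct and is exactly the paper's own argument: the paper derives Theorem~\ref{Tmult} by simply observing that the proof of Theorem~\ref{QRESULT} carries over verbatim to any field of characteristic $0$. Your added remark about the integral-domain property and the distinctness of $0,1,2$ being the only field-dependent points is a careful (and accurate) elaboration of the same reasoning.
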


Another result which follows quickly is:

\begin{corollary}\label{Qcont}
If $\mathbb{F}$ is any subfield of $\mathbb{R}$, the only continuous SD map on $\mathbb{F}$
is the identity.
\end{corollary}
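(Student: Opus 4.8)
The plan is to deduce this quickly from the pointwise fixing of $\mathbb{Q}$ already established in Theorem~\ref{Tmult}, combined with the density of $\mathbb{Q}$ and the assumed continuity of $f$. Since $\mathbb{F}$ is a subfield of $\mathbb{R}$ and $\mathrm{char}(\mathbb{F}) = 0$, Theorem~\ref{Tmult} applies and tells us that any SD map $f$ on $\mathbb{F}$ satisfies $f(q) = q$ for every $q \in \mathbb{Q}$. So before invoking continuity at all, I already know that $f$ agrees with the identity on the copy of $\mathbb{Q}$ sitting inside $\mathbb{F}$.

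Next I would exploit density. Because $\mathbb{Q} \subseteq \mathbb{F} \subseteq \mathbb{R}$ and $\mathbb{Q}$ is dense in $\mathbb{R}$, the rationals are also dense in $\mathbb{F}$ with respect to the subspace topology inherited from $\mathbb{R}$. Concretely, for any $x \in \mathbb{F}$ I can pick a sequence of rationals $q_n \to x$; crucially each $q_n$ lies in $\mathbb{F}$ as well, since $\mathbb{Q} \subseteq \mathbb{F}$, so continuity of $f$ on $\mathbb{F}$ genuinely applies to this sequence.

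Finally I would combine the two observations. Continuity of $f$ gives $f(q_n) \to f(x)$, while the fixing of $\mathbb{Q}$ gives $f(q_n) = q_n \to x$. By uniqueness of limits in $\mathbb{R}$ we conclude $f(x) = x$, and since $x \in \mathbb{F}$ was arbitrary, $f$ is the identity on $\mathbb{F}$. Conversely the identity is trivially a continuous SD map, so it is the only one.

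There is essentially no hard step here: the corollary is a short ``dense subset plus continuity'' argument once Theorem~\ref{Tmult} is in hand. The only point requiring a moment's care is the topological bookkeeping—namely that $f$ is assumed continuous on $\mathbb{F}$ in its subspace topology, and that the approximating rationals remain inside $\mathbb{F}$ so that the continuity hypothesis is legitimately invoked—but both are immediate from $\mathbb{Q} \subseteq \mathbb{F} \subseteq \mathbb{R}$.
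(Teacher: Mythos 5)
Your proposal is correct and follows essentially the same route as the paper: invoke Theorem~\ref{Tmult} to see that $f$ fixes $\mathbb{Q}$, then use density of $\mathbb{Q}$ in $\mathbb{F} \subseteq \mathbb{R}$ together with continuity to pass to arbitrary $x \in \mathbb{F}$. The extra care you take about the subspace topology and the approximating rationals lying in $\mathbb{F}$ is a fine (if unnecessary for the paper's level of detail) addition.
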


\begin{proof}
By Theorem \ref{Tmult}, $f$ fixes $\mathbb{Q}$ pointwise. Hence by
continuity, it fixes the closure of $\mathbb{Q}$ pointwise. But this
closure is $\mathbb{F}$, since $\mathbb{Q}$ is dense in $\mathbb{R}$ and
so in $\mathbb{F}$.
\end{proof}

\begin{remark}
    Just as the field of real numbers $\mathbb{R}$ is obtained by completing the field of rational numbers $\mathbb{Q}$ with respect to the usual absolute value norm $|\cdot|$, for any given prime $p \geq 2$,  the field of $p$-adic numbers $\mathbb{Q}_p$ is obtained by completing $\mathbb{Q}$ with respect to the $p$-adic norm $|\cdot|_p$; see \cite{padic} for details. The completed field $\mathbb{Q}_p$ contains $\mathbb{Q}$ as a dense subfield. Thus, we get the following corollary, which is analogous to the real case.
\end{remark}

\begin{corollary} \label{Qpcont}
    If $\mathbb{F}$ is any subfield of $\mathbb{Q}_p$, the only continuous SD map on $\mathbb{F}$ is the identity.
\end{corollary}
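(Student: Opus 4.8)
The plan is to mirror the proof of Corollary~\ref{Qcont} almost verbatim, simply replacing the Archimedean absolute value with the $p$-adic norm $|\cdot|_p$ and the completion $\mathbb{R}$ with $\mathbb{Q}_p$. Here ``continuous'' is understood with respect to the subspace topology that $\mathbb{F}$ inherits from the $p$-adic topology on $\mathbb{Q}_p$. First I would supply the algebraic input: since $\mathbb{Q}_p$ has characteristic zero, so does every subfield $\mathbb{F} \subseteq \mathbb{Q}_p$, and hence Theorem~\ref{Tmult} applies directly to give that an SD map $f$ on $\mathbb{F}$ fixes $\mathbb{Q}$ pointwise. Note that nothing about the $p$-adic topology is used at this stage — the fixing of $\mathbb{Q}$ is purely algebraic.

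Next I would transfer density from $\mathbb{Q}_p$ down to the subfield $\mathbb{F}$. The preceding remark records that $\mathbb{Q}$ sits inside $\mathbb{Q}_p$ as a dense subfield for the $p$-adic topology. Given any $x \in \mathbb{F}$, I would view $x$ as an element of $\mathbb{Q}_p$ and choose rationals $r_n \to x$ in $|\cdot|_p$; since each $r_n$ lies in $\mathbb{Q} \subseteq \mathbb{F}$ and the topology on $\mathbb{F}$ is induced from $\mathbb{Q}_p$, this shows that $\mathbb{Q}$ is dense in $\mathbb{F}$ as well. Continuity of $f$ then finishes the argument exactly as before:
\[
f(x) = f\bigl( \lim r_n \bigr) = \lim f(r_n) = \lim r_n = x,
\]
so $f$ is the identity on $\mathbb{F}$.

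The only step that is not a literal copy of the real case is the passage from density of $\mathbb{Q}$ in $\mathbb{Q}_p$ to density of $\mathbb{Q}$ in the subfield $\mathbb{F}$, but this is immediate from transitivity of density for the subspace topology, just as in Corollary~\ref{Qcont}. I therefore do not expect any genuine obstacle: the true content of the statement is the general principle that a continuous map fixing a dense subfield must be the identity, and this mechanism is entirely insensitive to whether $\mathbb{Q}$ is completed at the real place or at a finite place $p$. The corollary is thus best presented as a short remark that the proof of Corollary~\ref{Qcont} goes through unchanged in the non-Archimedean setting.
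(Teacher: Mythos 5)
Your proposal is correct and matches the paper's intended argument exactly: the paper presents this corollary as the direct analogue of Corollary~\ref{Qcont}, with Theorem~\ref{Tmult} supplying that $f$ fixes $\mathbb{Q}$ and the density of $\mathbb{Q}$ in $\mathbb{Q}_p$ (hence in any subfield $\mathbb{F}$, since $\mathbb{Q}\subseteq\mathbb{F}$) combining with continuity to force $f(x)=x$. Your extra care in noting that density passes to the subfield because the approximating rationals already lie in $\mathbb{F}$ is exactly the right justification.
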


These corollaries prompt us to ask whether these results are true for any subfield without assuming continuity. The answer is negative, as can be seen by considering the field automorphism $f \colon  \mathbb{Q}(\sqrt{2}) \rightarrow \mathbb{Q}(\sqrt{2})$, where $f$ fixes $\mathbb{Q}$ and $f(\sqrt{2}) = -\sqrt{2}$. It is easy to verify that this is a discontinuous function that satisfies our functional equation. 

This motivates us to consider how far these results extend without assuming continuity.

\begin{theorem}\label{Rmaps}
    Let $f \colon \mathbb{R} \rightarrow \mathbb{R}$ be an SD map. Then $f$ is the identity map.
\end{theorem}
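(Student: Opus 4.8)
The plan is to leverage everything already established about SD maps---injectivity, oddness, multiplicativity (the preliminary Proposition), together with the fact that $f$ fixes $\mathbb{Q}$ pointwise (Theorem~\ref{Tmult})---and to upgrade these purely algebraic constraints into a \emph{monotonicity} statement using only the order structure of $\mathbb{R}$, thereby sidestepping any continuity hypothesis. The first step is to record that $f$ is positive on the positive reals: for $x>0$ we may write $x=(\sqrt{x})^2$ in $\mathbb{R}$, so multiplicativity gives $f(x)=f(\sqrt{x})^2\ge 0$, and injectivity together with $f(0)=0$ forces $f(x)>0$.

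Next I would rewrite~\eqref{Esumdiff} in a form amenable to sign analysis. Since $x-y\neq 0$ implies $f(x-y)\neq 0$, multiplicativity lets me evaluate the left-hand side as $f\bigl((x+y)/(x-y)\bigr)=f(x+y)/f(x-y)$. Equating this with the right-hand side and cross-multiplying---legitimate because $f(x)\neq f(y)$ by injectivity---yields the key identity
\begin{equation*}
f(x+y)\,\bigl(f(x)-f(y)\bigr)=f(x-y)\,\bigl(f(x)+f(y)\bigr),\qquad \forall\, x\neq y.
\end{equation*}
Now restrict to $x>y>0$. Then $x+y$, $x-y$, $x$, and $y$ are all positive, so by the first step the values $f(x+y)$, $f(x-y)$, $f(x)$, $f(y)$ are all positive; in particular the right-hand side above is strictly positive. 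Since $f(x+y)>0$, the remaining factor $f(x)-f(y)$ must itself be strictly positive, i.e.\ $f(x)>f(y)$. This shows $f$ is \emph{strictly increasing} on $(0,\infty)$, and oddness promotes this to strict monotonicity on all of $\mathbb{R}$.

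Finally, a strictly monotone map that fixes the dense subfield $\mathbb{Q}$ is forced to be the identity: for any $x\in\mathbb{R}$ and rationals $r_1<x<r_2$, monotonicity gives $r_1=f(r_1)\le f(x)\le f(r_2)=r_2$, and letting $r_1,r_2\to x$ squeezes $f(x)=x$. I expect the middle step to be the main obstacle: absent continuity there is no a priori reason for $f$ to respect the ordering of $\mathbb{R}$, so the crux is to extract monotonicity algebraically. The sign argument above is exactly what accomplishes this, and it rests on two features special to $\mathbb{R}$ (and to Euclidean subfields in general): positive elements are squares, forcing $f$ to preserve positivity, and the cross-multiplied form of~\eqref{Esumdiff} couples $f(x\pm y)$ to $f(x)\pm f(y)$ in a manner whose signs can be read off directly.
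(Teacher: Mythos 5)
Your proposal is correct and follows essentially the same route as the paper: positivity of $f$ on $(0,\infty)$ via squares, strict monotonicity extracted from the sign of the functional equation, and a density/squeeze argument against the fixed copy of $\mathbb{Q}$. The only cosmetic difference is that you cross-multiply after splitting the left side via multiplicativity, whereas the paper reads the sign directly off $f\bigl((a+b)/(a-b)\bigr)>0$; the two sign arguments are equivalent.
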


Analyzing the proof shows that the above result holds for several subfields of $\mathbb{R}$.

\begin{proof}
Note that if $x >  0$, then 
\[ f(x) = f((\sqrt{x})^2) = f(\sqrt{x})^2 > 0.\]
Since $f$ is an odd function, we also get that $f(x)<0$ when $x < 0$.

We use this observation to show that $f$ is an increasing function. To this end, let $a > b$.  We have 3 cases to consider. $a > b > 0$, $ 0 > a > b$, and $a > 0 > b$. The last case follows from the above observation because in this case $f(a) > 0$ and $f(b) <0$.  Since $f$ is an odd function, it suffices to establish the result in the first case $a > b > 0$. Then,
\[
f\left( \frac{a+b}{a-b} \right) = \frac{f(a) + f(b)}{f(a) - f(b)} > 0,
\]
since the argument is positive and $f$ maps positives to positives. The numerator of the RHS is positive, so the denominator must also be positive. This shows $f(a) > f(b)$.

We already know from Theorem~\ref{Tmult} that $f$ fixes all rationals. 
So, take an irrational $x$. Suppose $x < f(x) $. Since $\mathbb{Q}$ is dense in $\mathbb{R}$, we can pick a rational $q \in (x,f(x))$. Since $f$ is increasing,  $x < q \implies f(x) < f(q) = q$. But $f(x) > f(q)$, so we have a contradiction. We get a similar contradiction if $f(x) < x$. This shows that $f(x)=x$ for all $x$.
\end{proof}

It is an interesting exercise for the reader to prove the above theorem under the additional assumption that 
$f$ is continuous, but without assuming Theorem \ref{QRESULT}. This special case was, in fact, what initiated this project!

We can extend the above theorem to an important class of subfields of $\mathbb{R}$.
 A field is \emph{Euclidean} if it is ordered and all non-negative elements have a square root. Clearly, any subfield of $\mathbb{R}$ inherits an ordering from $\mathbb{R}$. With this definition in hand, we show the following result.

\begin{theorem}\label{Euclidean}
    Let $\mathbb{F}$ be any Euclidean subfield of $\mathbb{R}$ and $f \colon \mathbb{F} \rightarrow \mathbb{F}$ be an SD map. Then $f$ is the identity map.
\end{theorem}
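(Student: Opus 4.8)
The plan is to observe that the proof of Theorem~\ref{Rmaps} used the full strength of $\mathbb{R}$ in exactly one place---the existence of square roots of positive elements---and that this is precisely the defining feature of a Euclidean field. I would therefore reprove the three steps of that argument, now over an arbitrary Euclidean subfield $\mathbb{F} \subseteq \mathbb{R}$.

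First I would establish that $f$ preserves signs. Given $x > 0$ in $\mathbb{F}$, the Euclidean hypothesis guarantees $\sqrt{x} \in \mathbb{F}$, so by multiplicativity $f(x) = f(\sqrt{x})^2$. Since $f$ is injective with $f(0) = 0$, we have $f(\sqrt{x}) \neq 0$, and hence $f(x) = f(\sqrt{x})^2 > 0$. Oddness of $f$ then gives $f(x) < 0$ whenever $x < 0$. This is the only step that invokes the Euclidean property.

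Second, I would deduce that $f$ is strictly increasing, exactly as in Theorem~\ref{Rmaps}. For $a > b$ in $\mathbb{F}$, the case $a > 0 > b$ is immediate from sign preservation, and by oddness it suffices to treat $a > b > 0$. There, the fact that $(a+b)/(a-b) > 0$ forces $(f(a)+f(b))/(f(a)-f(b)) > 0$; as the numerator is positive, so is the denominator, giving $f(a) > f(b)$.

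Finally I would conclude that $f$ is the identity. By Theorem~\ref{Tmult}, $f$ fixes $\mathbb{Q}$ pointwise. For any $x \in \mathbb{F}$ with, say, $x < f(x)$, density of $\mathbb{Q}$ in $\mathbb{R}$ furnishes a rational $q$ with $x < q < f(x)$; then monotonicity gives $f(x) < f(q) = q < f(x)$, a contradiction, and the case $f(x) < x$ is symmetric. I do not anticipate a genuine obstacle: the one real input, density of $\mathbb{Q}$, concerns the comparison of the real numbers $x$ and $f(x)$ and so is inherited from $\mathbb{R}$ regardless of which subfield $\mathbb{F}$ we work in. The entire content of the theorem is captured by noticing that the $\mathbb{R}$-proof localizes to any field with the Euclidean property.
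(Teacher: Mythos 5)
Your proposal is correct and follows exactly the paper's own argument: the paper likewise observes that the only use of $\mathbb{R}$-specific structure in Theorem~\ref{Rmaps} is the existence of square roots of positive elements, replaces it with the Euclidean hypothesis, and then repeats the monotonicity and $\mathbb{Q}$-density steps verbatim. No differences worth noting.
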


\begin{proof}
The proof is essentially identical to Theorem \ref{Rmaps}.
If $x>0$ in $\mathbb{F}$, then by definition, $\sqrt{x} \in \mathbb{F}$, which implies $f(x) > 0$.
Hence, we can show that $f$ is strictly increasing on $\mathbb{F}$, just as for Theorem \ref{Rmaps}.
But now $\mathbb{Q}$ is dense in $\mathbb{F}$ and $f$ fixes $\mathbb{Q}$. Hence, by the same argument as for Theorem \ref{Rmaps}, $f(x) = x$ for all $x \in \mathbb{F}$.    
\end{proof}

\begin{remark}
   Note that the proof above does not require $f$ to take values in the Euclidean field $\mathbb{F}$ -- it suffices to have $f$ take values in $\mathbb{R}$. 
\end{remark}

Examples of Euclidean subfields of $\mathbb{R}$ include the Constructible Numbers (these are the reals obtained from $1$ via ruler and compass), which are highly relevant to Euclidean geometry; and the field of Real Algebraic Numbers, which are important in algebraic number theory.

\section{The field of complex numbers}

Our exploration of subfields of $\mathbb{R}$ leads us, quite naturally, to investigate SD maps over $\mathbb{C}$. As a preliminary, we note that our results in the previous section imply the following:

\begin{lemma}\label{Lcont}
    Let $f \colon \mathbb{C} \rightarrow \mathbb{C}$ be an SD map. Then the following are equivalent:
\begin{enumerate}
    \item $f$ preserves $\mathbb{R}$, i.e., $f(\mathbb{R}) \subseteq \mathbb{R}$.
    \item $f$ is continuous on $\mathbb{R}$.
    \item $f$ fixes $\mathbb{R}$ pointwise.
\end{enumerate}    
\end{lemma}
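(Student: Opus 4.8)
The plan is to establish the cyclic chain of implications $(1) \Rightarrow (3) \Rightarrow (2) \Rightarrow (1)$. This is economical because the middle link is immediate and the other two reduce to results already proved: Theorem~\ref{Rmaps} handles the algebraic direction and the density argument of Corollary~\ref{Qcont} handles the analytic one.

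For $(1) \Rightarrow (3)$, I would restrict $f$ to the real line and argue that $f|_{\mathbb{R}}$ is itself an SD map on $\mathbb{R}$. The hypothesis $f(\mathbb{R}) \subseteq \mathbb{R}$ ensures that $f|_{\mathbb{R}}$ is a genuine self-map of $\mathbb{R}$. For real $x \neq y$, the argument $(x+y)/(x-y)$ is real, so the left-hand side of~\eqref{Esumdiff} lands in $\mathbb{R}$; meanwhile $f(x), f(y) \in \mathbb{R}$ are distinct by injectivity, so the right-hand side is a well-defined real number. Hence $f|_{\mathbb{R}}$ satisfies~\eqref{Esumdiff} as a map $\mathbb{R} \to \mathbb{R}$, and Theorem~\ref{Rmaps} forces it to be the identity, which is exactly $(3)$.

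The implication $(3) \Rightarrow (2)$ is trivial, since the identity on $\mathbb{R}$ is continuous. For $(2) \Rightarrow (1)$ I would reuse the approximation argument: by Theorem~\ref{Tmult} the map $f$ fixes $\mathbb{Q}$ pointwise, and given any $x \in \mathbb{R}$ I would choose rationals $r_n \to x$ and use continuity of $f$ on $\mathbb{R}$ to conclude $f(x) = \lim_n f(r_n) = \lim_n r_n = x$. In particular $f(x) \in \mathbb{R}$, so $f(\mathbb{R}) \subseteq \mathbb{R}$, giving $(1)$ (and in fact recovering $(3)$). The only step requiring any care is the verification inside $(1) \Rightarrow (3)$ that $f|_{\mathbb{R}}$ is truly an SD map rather than just a function satisfying the identity with possibly complex outputs; this rests on hypothesis $(1)$ supplying real values and on injectivity guaranteeing a nonzero denominator. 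Once that is secured, no genuinely new difficulty arises, as the substantive work is carried by Theorems~\ref{Rmaps} and~\ref{Tmult}.
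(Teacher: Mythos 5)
Your proposal is correct and is essentially the argument the paper intends: the lemma is stated without an explicit proof, with the remark that it follows from the results of the preceding section, and your cycle $(1)\Rightarrow(3)\Rightarrow(2)\Rightarrow(1)$ uses exactly those results (Theorem~\ref{Rmaps} after checking that $f|_{\mathbb{R}}$ is an SD map on $\mathbb{R}$, and the density argument of Corollary~\ref{Qcont} via Theorem~\ref{Tmult}). Your explicit verification that the restriction is a genuine real SD map is the right point to be careful about, and it is handled correctly.
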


The lemma above indicates the following fact.

\begin{proposition}\label{Cbasics}
    Let $f \colon \mathbb{C} \rightarrow \mathbb{C}$ be an $\mathbb{R}$ preserving SD map and let $\mathbb{C}^\times$ denote the multiplicative group of $\mathbb{C}$. Then one of the following must hold.
    \begin{enumerate}
        \item $f(z/\bar{z}) = z/\bar{z}$ and $f(z^2) = z^2$, for all $z \in \mathbb{C}^\times$.
        \item $f(z/\bar{z}) = \bar{z}/z$ and $f(z^2) = \bar{z}^2$, for all $z \in \mathbb{C}^\times$.
    \end{enumerate}
\end{proposition}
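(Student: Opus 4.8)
The plan is to reduce the entire statement to a single binary choice, namely the value of $f(i)$, and then propagate that choice using multiplicativity together with one well-chosen substitution into the SD equation. First I would record the dichotomy. Since $f$ is $\mathbb{R}$-preserving, the preceding Lemma gives that $f$ fixes $\mathbb{R}$ pointwise; by multiplicativity $f(i)^2 = f(i^2) = f(-1) = -1$, so $f(i) = i$ or $f(i) = -i$. This one choice will determine which of the alternatives (1), (2) holds, globally and uniformly in $z$. Multiplicativity then pins down $f$ on the whole imaginary axis: for $t \in \mathbb{R}$ we have $f(it) = f(i)f(t) = f(i)\,t$, so $f$ acts as the identity on $i\mathbb{R}$ if $f(i) = i$, and as conjugation $it \mapsto -it = \overline{it}$ if $f(i) = -i$.

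Next I would exploit the substitution $y = 1$ in \eqref{Esumdiff}. Using $f(1) = 1$, the equation reads $f(g(x)) = g(f(x))$ for all $x \neq 1$, where $g(x) = (x+1)/(x-1)$ is a Möbius involution with real coefficients. The geometric point is that $g$ interchanges the imaginary axis and the unit circle: for non-real $z$, the number $x_z := g(z/\bar z) = (z+\bar z)/(z-\bar z)$ is purely imaginary, and $g(x_z) = z/\bar z$ since $g^2 = \mathrm{id}$. Combining this with the commutation relation gives $f(z/\bar z) = f(g(x_z)) = g(f(x_z)) = g(\pm x_z)$. In the case $f(i) = i$ we have $f(x_z) = x_z$, hence $f(z/\bar z) = z/\bar z$; in the case $f(i) = -i$ we have $f(x_z) = \overline{x_z}$, and since $g$ has real coefficients $g(\overline{x_z}) = \overline{g(x_z)} = \overline{z/\bar z} = \bar z/z$. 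The degenerate value is harmless: if $z$ is real then $z/\bar z = 1 = f(1)$, which is consistent with both alternatives.

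Finally, the assertion about $f(z^2)$ follows with no further use of the functional equation. Writing $z^2 = (z/\bar z)\,|z|^2$ and using that $|z|^2 \in \mathbb{R}$ is fixed by $f$, multiplicativity yields $f(z^2) = f(z/\bar z)\,|z|^2$, which equals $z^2$ in the first case and $\bar z^2$ in the second. The main obstacle — really the single idea that drives the whole argument — is recognizing the $y=1$ substitution as the statement that $f$ commutes with the Cayley-type involution $g$, and that this $g$ is precisely the bridge transporting the easily understood behavior of $f$ on the imaginary axis over to the unit circle $\{z/\bar z\}$. Once that is in place the remaining work is bookkeeping, and the only points needing care are the degenerate values $z/\bar z \in \{1, -1\}$ and the verification that the single choice of $f(i)$ forces the same alternative for every $z$.
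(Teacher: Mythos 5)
Your proof is correct, and its skeleton is exactly the paper's: reduce to the dichotomy $f(i)=\pm i$ via $f(i)^2=f(-1)=-1$, determine $f$ on the imaginary axis by multiplicativity, apply the SD equation once to transport this to the numbers $z/\bar z$, and finally get $f(z^2)=f(z/\bar z)\,|z|^2$ from multiplicativity and $f(|z|^2)=|z|^2$. The only real difference is the substitution used in the transport step. The paper writes $z=a+ib$ and plugs $(x,y)=(a,ib)$ directly into \eqref{Esumdiff}, obtaining $f(z/\bar z)=\bigl(a+f(i)b\bigr)/\bigl(a-f(i)b\bigr)$ in one line, uniformly for all $z\in\mathbb{C}^\times$. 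You instead plug $(x,y)=(x_z,1)$ and exploit that $g(x)=(x+1)/(x-1)$ is a real-coefficient involution carrying the imaginary axis to the unit circle, so that $f\circ g=g\circ f$ transports the known action of $f$ on $i\mathbb{R}$ to $z/\bar z$. Your route costs a little more bookkeeping (the identities $g^2=\mathrm{id}$ and $g(\overline{w})=\overline{g(w)}$, plus the separate check for real $z$, where $x_z$ is undefined), but it isolates a clean structural fact -- $f$ commutes with the Cayley-type involution $g$ -- that the paper's direct computation leaves implicit. Both arguments are sound and of essentially equal strength.
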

\begin{proof}
    Since $f$ preserves $\mathbb{R}$, $f(x) = x$ for all $x \in  \mathbb{R}$. Furthermore, \[-1 = f(-1) = f(i^2) = f(i)^2 \implies f(i) = \pm{i}.\]
    \begin{enumerate}
        \item Suppose $f(i) = i$ and pick any $z = a + ib$ in $\mathbb{C}^{\times}$ where $a, b \in \mathbb{R}$. 
        
        \noindent Then we have:
        \[f\left(\frac{z}{\bar{z}} \right) = f\left(\frac{a + ib}{a -ib}\right) = \frac{f(a) + f(ib)}{f(a) - f(ib)} = \frac{f(a) + f(i)f(b)}{f(a) - f(i)f(b)} = \frac{a + ib}{a - ib} = \frac{z}{\bar{z}}. \]
        But now note that $|z| \in \mathbb{R} \implies f(|z|) = |z|$. Hence, for all $ z \in \mathbb{C}^\times$, we have
        \[\frac{z}{\bar{z}} = f \left(\frac{z}{\bar{z}} \right) = f\left(\frac{zz}{\bar{z}z} \right) = f\left(\frac{z^2}{|z|^2} \right) = \frac{f(z^2)}{|z|^2} \implies f(z^2) = \frac{z |z|^2}{\bar{z}} = z^2 . \]
        
        \item Suppose $f(i) = -i$ and pick any $z = a + ib$ in $\mathbb{C}^{\times}$ where $a, b \in \mathbb{R}$. 
        
        \noindent Then we have:
        \[f\left(\frac{z}{\bar{z}}\right) = f\left(\frac{a + ib}{a -ib}\right) = \frac{f(a) + f(ib)}{f(a) - f(ib)} = \frac{f(a) + f(i)f(b)}{f(a) - f(i)f(b)} = \frac{a - ib}{a + ib} = \frac{\bar{z}}{z}. \]
        
        \noindent Hence, for all $ z \in \mathbb{C}^\times$, we have
        \[\frac{\bar{z}}{z} = f\left(\frac{z}{\bar{z}}\right) = f\left(\frac{zz}{\bar{z}z}\right) = f\left(\frac{z^2}{|z|^2}\right) = \frac{f(z^2)}{|z|^2} \implies f(z^2) = \frac{\bar{z}|z|^2}{z} = \bar{z}^2.  \qedhere \]
    \end{enumerate}
\end{proof}

We now look at continuous SD maps over $\mathbb{C}$. Note that continuity over $\mathbb{C}$ is a strictly stronger property than $\mathbb{R}$ preserving, which only guarantees continuity over $\mathbb{R}$.

\begin{theorem}\label{Ccont}
  Let $f \colon \mathbb{C} \rightarrow \mathbb{C}$ be a continuous SD map. Then $f$ must be the identity or the conjugate map. 
\end{theorem}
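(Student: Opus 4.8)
The plan is to bootstrap from the results already in hand rather than argue from scratch. The first step is to note that continuity of $f$ on all of $\mathbb{C}$ restricts to continuity on the real line, so $f|_{\mathbb{R}}$ is a continuous map $\mathbb{R} \to \mathbb{C}$. Since every SD map fixes $\mathbb{Q}$ pointwise by Theorem~\ref{Tmult}, and $\mathbb{Q}$ is dense in $\mathbb{R}$, the same limiting argument used in Corollary~\ref{Qcont} gives $f(x) = x$ for every real $x$; equivalently, $f|_{\mathbb{R}} = \mathrm{id}_{\mathbb{R}}$. In particular $f$ is $\mathbb{R}$ preserving, which is exactly the hypothesis needed to deploy Proposition~\ref{Cbasics}. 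I would emphasize that this is the \emph{only} place where the full strength of continuity (as opposed to mere $\mathbb{R}$ preservation) is used: everything downstream is purely algebraic.

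Next I would invoke Proposition~\ref{Cbasics}, which according to whether $f(i) = i$ or $f(i) = -i$ produces the dichotomy $f(z^2) = z^2$ for all $z \in \mathbb{C}^\times$, respectively $f(z^2) = \bar{z}^2$ for all $z \in \mathbb{C}^\times$. The decisive structural feature of $\mathbb{C}$ that I would then exploit is that the squaring map $z \mapsto z^2$ is surjective onto $\mathbb{C}^\times$: every nonzero complex number is a square. Hence in the first case, given any $w \in \mathbb{C}^\times$ I write $w = z^2$ and read off $f(w) = f(z^2) = z^2 = w$, so together with $f(0) = 0$ this yields $f = \mathrm{id}$; in the second case the identical computation gives $f(w) = \bar{z}^2 = \overline{w}$, so $f$ is the conjugation map. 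Geometrically this is the same statement as saying that the first alternative of Proposition~\ref{Cbasics} forces $f$ to fix every point of the unit circle (as $z/\bar{z}$ sweeps out the whole circle), after which multiplicativity plus $f|_{\mathbb{R}_{>0}} = \mathrm{id}$ and the polar factorization $w = |w| \cdot (w/|w|)$ recover $f(w) = w$ for all $w$.

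I do not anticipate a serious obstacle, but the one point that genuinely requires care is that an SD map is a priori known only to be \emph{multiplicative}, not additive. Consequently one cannot reconstruct $f$ on $\mathbb{C}$ from its values on $\mathbb{R}$ and at $i$ by any naive $\mathbb{R}$-linearity, and it would be a mistake to treat $f(a+bi)$ as $f(a) + f(b)f(i)$ without justification. The function of Proposition~\ref{Cbasics} is precisely to circumvent this: it pins down $f$ on a multiplicatively generating piece of $\mathbb{C}^\times$ (the squares, equivalently the unit circle) straight from the functional equation, so that multiplicativity together with the surjectivity of squaring suffices to determine $f$ everywhere. With that proposition available, the proof of Theorem~\ref{Ccont} becomes a short two-case deduction.
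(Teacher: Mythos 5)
Your proof is correct, but it takes a genuinely different route from the paper's own proof of Theorem~\ref{Ccont}. The paper combines $f(z^2)=z^2$ (resp.\ $\bar z^2$) with multiplicativity to get $f(z)^2 = z^2$ (resp.\ $\bar z^2$), hence $f(z)=\pm z$ (resp.\ $\pm\bar z$) \emph{pointwise}, and then uses continuity a second time, topologically: $f(z)/z$ is a continuous function on the connected set $\mathbb{C}^\times$ taking values in $\{-1,1\}$ and equal to $1$ at $z=1$, so the sign is constant. You instead observe that squaring is surjective onto $\mathbb{C}^\times$, so $f(z^2)=z^2$ for all $z$ already says $f(w)=w$ for every $w\in\mathbb{C}^\times$ directly, with no sign ambiguity to resolve and no topology needed. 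Your route is cleaner and strictly more economical with the hypothesis: continuity is used only once, to force $\mathbb{R}$-preservation, after which everything is algebra. Indeed, what you have written is essentially the paper's proof of the \emph{stronger} Theorem~\ref{CRpres} (there phrased via the polar factorization $w=|w|e^{i\theta}$ and the half-angle point $e^{i\theta/2}$, which is the same surjectivity-of-squaring fact restricted to the unit circle); the paper itself remarks after Theorem~\ref{Ccont} that $\mathbb{R}$-preservation suffices. What the paper's connectedness argument buys is a self-contained illustration of a topological proof technique (one of the stated themes of the article); what your argument buys is brevity and the observation that Theorem~\ref{Ccont} is an immediate corollary of Proposition~\ref{Cbasics} plus the density argument of Corollary~\ref{Qcont}. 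Your cautionary remark about not assuming additivity (i.e.\ not writing $f(a+bi)=f(a)+f(b)f(i)$ without justification) is well placed; note that Proposition~\ref{Cbasics} legitimately obtains $f(a+ib)$-type expressions only inside the quotient $(x+y)/(x-y)$ supplied by the functional equation itself.
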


\begin{proof}
    The proof involves considering the two cases from Proposition \ref{Cbasics}.
    Since $f$ is continuous, it follows from Lemma~\ref{Lcont} that $f$ is $\mathbb{R}$ preserving. Hence, we have two cases:
    \begin{enumerate}
        \item $f(z)^2 = z^2 \implies f(z) = \pm{z} \implies f(z)/z = \pm{1}$ for all $z \in \mathbb{C}^\times $. 
        
        \noindent The continuity of $f$ implies that $\frac{f(z)}{z}$ is continuous on $\mathbb{C}^\times$ and takes values in the discrete set $\{-1, 1\}$. Furthermore, $\mathbb{C}^\times$ is a connected set and $f(1)/1 = 1$. 
        
        \noindent Hence, $\frac{f(z)}{z}$ is identically equal to 1 over $\mathbb{C}^\times$, 
        which implies that $f(z) = z$ for all $z \in \mathbb{C}^\times$.

    \item $f(z)^2 = \bar{z}^2 \implies f(z) = \pm{\bar{z}} \implies f(z)/\bar{z} = \pm{1}$ for all $z \in \mathbb{C}^\times$.

 \noindent Using the continuity of $f$ and the conjugation function $z \to \bar{z}$, we argue as above to show that $f(z) = \bar{z}$ for all $z \in \mathbb{C}^\times$.
    \end{enumerate}

\noindent Now the fact that $f(0) = 0$ completes the proof.
\end{proof}

However, we need not assume $f$ is continuous and, in fact, $f$ being $\mathbb{R}$ preserving suffices for the result above. 

\begin{theorem}\label{CRpres}
  Let $f \colon \mathbb{C} \rightarrow \mathbb{C}$ be an $\mathbb{R}$ preserving SD map. Then $f$ must be the identity or the conjugate map.  
\end{theorem}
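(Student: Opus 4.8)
The plan is to deduce the result directly from Proposition~\ref{Cbasics}, exploiting the one feature of $\mathbb{C}$ that was unavailable over $\mathbb{R}$: the field is algebraically closed, so the squaring map $z \mapsto z^2$ is surjective onto $\mathbb{C}^\times$. Proposition~\ref{Cbasics} already splits the problem into exactly two cases according to whether $f(i) = i$ or $f(i) = -i$, and in each case it pins down the value of $f$ precisely on the set of squares, namely $f(z^2) = z^2$ in the first case and $f(z^2) = \bar{z}^2$ in the second. Since every nonzero complex number is a square, knowing $f$ on all squares already determines $f$ on all of $\mathbb{C}^\times$.

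Concretely, in the first case I would argue as follows. Given any $w \in \mathbb{C}^\times$, write $w = z^2$ for some $z \in \mathbb{C}^\times$ (a square root exists because $\mathbb{C}$ is algebraically closed). Then $f(w) = f(z^2) = z^2 = w$, so $f$ fixes every element of $\mathbb{C}^\times$; together with $f(0) = 0$ this shows $f$ is the identity. In the second case, the same substitution gives $f(w) = f(z^2) = \bar{z}^2 = \overline{z^2} = \bar{w}$, using that conjugation is a ring homomorphism; hence $f$ agrees with complex conjugation on $\mathbb{C}^\times$, and again $f(0) = 0$ finishes the proof that $f$ is the conjugate map.

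The point worth flagging is that, unlike the proof of Theorem~\ref{Ccont}, this argument needs no topology at all: the connectedness of $\mathbb{C}^\times$ used there to rule out a sign that jumps between $+z$ and $-z$ is replaced here by the purely algebraic observation that the squares already exhaust $\mathbb{C}^\times$. In that sense there is no genuine obstacle left once Proposition~\ref{Cbasics} is in hand; the real work -- using $\mathbb{R}$-preservation to force $f(i) = \pm i$ and to evaluate $f$ on the squares and on the unit-modulus ratios $z/\bar{z}$ -- has already been carried out there. If anything, the only things to double-check are the bookkeeping at $z = 0$ and the identity $\overline{z^2} = \bar{z}^2$, both of which are immediate.
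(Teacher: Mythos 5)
Your proof is correct, and it takes a slightly different route from the paper's. The paper's argument uses the \emph{first} half of each alternative in Proposition~\ref{Cbasics}: it writes $z = |z|e^{i\theta}$, uses multiplicativity and $f(|z|)=|z|$ to reduce to the unit circle, and then evaluates $f(e^{i\theta})$ as $f(w/\bar w)$ with $w = e^{i\theta/2}$. You instead invoke the \emph{second} half, $f(z^2)=z^2$ (resp.\ $f(z^2)=\bar z^2$), valid for all $z \in \mathbb{C}^\times$, and combine it with the surjectivity of squaring on $\mathbb{C}^\times$ to determine $f$ everywhere at once. Both arguments ultimately rest on the same fact --- every nonzero complex number has a square root --- but yours packages it more economically: it skips the polar decomposition and the half-angle bookkeeping entirely, and makes explicit why no connectedness argument (as in Theorem~\ref{Ccont}) is needed. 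The paper's version has the minor virtue of generalizing verbatim to $\mathbb{E}(i)$ for a Euclidean subfield $\mathbb{E}$ of $\mathbb{R}$ (where one knows $e^{i\theta/2}$ stays in the field via Proposition~\ref{Fbasics}, but squaring need not be surjective); your version, as stated, leans on algebraic closure and so is specific to $\mathbb{C}$ --- though even there one only needs closure under square roots, which $\mathbb{E}(i)$ also enjoys, so the gap is cosmetic. Your checks at $z=0$ and of $\overline{z^2}=\bar z^2$ are both fine.
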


In light of Lemma~\ref{Lcont}, it is clear that Theorem~\ref{Ccont} is a
corollary of Theorem~\ref{CRpres}. However, we decided to present the
above proof of Theorem~\ref{Ccont} as it utilizes a tool not used
elsewhere: the connectedness of $\mathbb{C}^\times$. This contrasts with
all other proofs in this work, which use the denseness of $\mathbb{Q}$ or
of $\mathbb{Q}(i)$.

\begin{proof}[Proof of Theorem~\ref{CRpres}]
 Since $f$ is $\mathbb{R}$ preserving, $f$ fixes $\mathbb{R}$.
Now let $z \in \mathbb{C}^\times$. Then $z = |z| e^{i\theta}$ where $|z| \in \mathbb{R}$ and $e^{i\theta} \in S^1$, the unit circle in the complex plane.
Hence, \[ f(z) = f(|z| e^{i\theta}) = f(|z|) f(e^{i \theta}) = |z| f(e^{i\theta}). \]
Now we consider the two cases from Proposition \ref{Cbasics}.

\begin{enumerate}
    \item Suppose $f(z/\bar{z}) = z/\bar{z}$ for all $z \in \mathbb{C}^\times$. Let $e^{i \theta} \in S^1 $ and let $w = e^{i \frac{\theta}{2}}$.
Then \[f(e^{i \theta}) = f\left(\frac{w}{\bar{w}}\right) = \frac{w}{\bar{w}} = e^{i \theta}. \]
    Hence, $f(z) = |z| f(e^{i \theta}) = |z| e^{i \theta} = z$ for all $z \in \mathbb{C}^\times.$

    \item Suppose $f(z/\bar{z}) = \bar{z}/z$ for all $z \in \mathbb{C}^\times$. Let $e^{i \theta} \in S^1 $ and let $w = e^{i \frac{\theta}{2}}$.
Then \[f(e^{i \theta}) = f\left(\frac{w}{\bar{w}}\right) = \frac{\bar{w}}{w} = e^{-i \theta}. \]
    Hence, $f(z) = |z| f(e^{i \theta}) = |z| e^{-i \theta} = \bar{z}$ for all $z \in \mathbb{C}^\times.$
\end{enumerate}
Now the fact that $f(0) = 0$ completes the proof in either case.
\end{proof}

The result above is remarkable because the same conclusion holds for $\mathbb{R}$ preserving field automorphisms over $\mathbb{C}$. In fact, it is easily verified that field automorphisms satisfy our functional equation and are, therefore, SD maps.

Automorphisms of $\mathbb{C}$ which do not preserve $\mathbb{R}$ are extremely wild -- not continuous, not even measurable, and not describable explicitly. Their existence relies on the Axiom of Choice and arises from model-theoretic constructions; see, e.g., \cite{Kestelman}, \cite{Yale}.
Hence, we cannot expect SD maps over $\mathbb{C}$ to exhibit nice properties without imposing additional conditions.

We conclude this section with another result similar to the result for Euclidean fields in the previous section.
\begin{proposition}\label{Fbasics}
    Let $\mathbb{E}$ be an Euclidean subfield of $\mathbb{R}$ and let $\mathbb{F} = \mathbb{E}(i)$. Then:
    \begin{enumerate}
        \item If $e^{i \theta} \in \mathbb{F}$, then $e^{i \frac{\theta}{2}} \in \mathbb{F}$.
        \item $\mathbb{F}$ is closed under square roots.
        \item $\mathbb{F}$ is the quadratic closure of $\mathbb{E}$.
    \end{enumerate}
\end{proposition}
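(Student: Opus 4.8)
The plan is to establish the three parts in order, with part (1) carrying the real content and parts (2) and (3) following formally. Throughout I will use the defining property that $\mathbb{E}$ is Euclidean: it is ordered as a subfield of $\mathbb{R}$, and every non-negative element of $\mathbb{E}$ has a square root in $\mathbb{E}$. I will also repeatedly use that membership in $\mathbb{F} = \mathbb{E}(i)$ is equivalent to having both real and imaginary parts in $\mathbb{E}$.

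For part (1), the idea is to invoke the half-angle formulas and let the Euclidean hypothesis supply the required square roots. Given $e^{i\theta} \in \mathbb{F}$, I would write $e^{i\theta} = c + is$ with $c = \cos\theta$ and $s = \sin\theta$ in $\mathbb{E}$, satisfying $c^2 + s^2 = 1$; hence $|c| \le 1$, so that $\tfrac{1+c}{2}$ and $\tfrac{1-c}{2}$ are non-negative elements of $\mathbb{E}$. The identities $|\cos(\theta/2)| = \sqrt{(1+c)/2}$ and $|\sin(\theta/2)| = \sqrt{(1-c)/2}$ then place both magnitudes in $\mathbb{E}$ by the Euclidean property. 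Since $\mathbb{E}$ is closed under negation, every sign combination $\pm\sqrt{(1+c)/2} \pm i\sqrt{(1-c)/2}$ lies in $\mathbb{F}$; the combination matching the quadrant of $\theta/2$ is exactly $e^{i\theta/2}$, so $e^{i\theta/2} \in \mathbb{F}$. The only delicate point is the sign bookkeeping, and this is harmless precisely because $\mathbb{F}$ already contains all four candidate points.

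For part (2), I would take a nonzero $z = a + ib \in \mathbb{F}$ (the case $z=0$ being trivial). Since $a^2 + b^2 \ge 0$ lies in $\mathbb{E}$, the Euclidean property gives $|z| = \sqrt{a^2+b^2} \in \mathbb{E}$, and then $\sqrt{|z|} \in \mathbb{E}$ as well. Writing $z = |z|\,e^{i\theta}$ with $e^{i\theta} = z/|z| \in \mathbb{F}$ on the unit circle, part (1) yields $e^{i\theta/2} \in \mathbb{F}$, so that the two square roots $\pm\sqrt{|z|}\,e^{i\theta/2}$ of $z$ both lie in $\mathbb{F}$. (Alternatively, one can avoid part (1) entirely and solve $\sqrt{a+ib} = u+iv$ directly, since $u^2 = (|z|+a)/2$ and $v^2 = (|z|-a)/2$ are non-negative elements of $\mathbb{E}$, forcing $u,v \in \mathbb{E}$.)

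For part (3), I would use the definition of the quadratic closure $\mathbb{E}^{\mathrm{qc}}$ as the smallest subfield of $\mathbb{C}$ containing $\mathbb{E}$ and closed under square roots, and then exhibit two inclusions. By part (2), $\mathbb{F}$ is a field containing $\mathbb{E}$ that is closed under square roots, so minimality gives $\mathbb{E}^{\mathrm{qc}} \subseteq \mathbb{F}$. Conversely, $-1 \in \mathbb{E} \subseteq \mathbb{E}^{\mathrm{qc}}$ and $\mathbb{E}^{\mathrm{qc}}$ is closed under square roots, so $i \in \mathbb{E}^{\mathrm{qc}}$ and hence $\mathbb{F} = \mathbb{E}(i) \subseteq \mathbb{E}^{\mathrm{qc}}$; the two inclusions give $\mathbb{F} = \mathbb{E}^{\mathrm{qc}}$. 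I expect the main obstacle to be nothing deep but rather the care required in part (1): ensuring every radicand is manifestly non-negative so the Euclidean hypothesis genuinely applies, and confirming that the sign ambiguity in the half-angle formulas does not cause trouble.
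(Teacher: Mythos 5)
Your proposal is correct and follows essentially the same route as the paper: half-angle formulas plus the Euclidean square-root property for (1), the polar decomposition $z = |z|e^{i\theta}$ for (2), and a two-inclusion minimality argument for (3). In fact, your explicit handling of the sign ambiguity in the half-angle formulas is slightly more careful than the paper's own write-up, which states $\cos(\theta/2) = \sqrt{(1+\cos\theta)/2}$ without addressing quadrants.
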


\begin{proof}
  We prove each result in sequence.
  \begin{enumerate}
      \item If $e^{i\theta} = \cos \theta + i \sin  \theta \in \mathbb{F}$, then $\cos\theta, \sin\theta \in \mathbb{E} $.  Now $e^{i \frac{\theta}{2} } = \cos (\theta/2) + i \sin (\theta/2)$, with $\cos (\theta/2) = \pm \sqrt{ (1 + \cos\theta)/2}$ and $ \sin (\theta/2) = \pm \sqrt{ (1 - \cos\theta)/2}$. Since $\mathbb{E}$ is Euclidean, $\cos(\theta/2), \sin (\theta/2) \in \mathbb{E} $.
      Hence, $e^{i \frac{\theta}{2}} \in \mathbb{F}$.
      
      \item Let $z = a + ib \in \mathbb{F}$ where $a, b \in \mathbb{E}$. Note that $z = |z| e^{i\theta}$ where $\theta = \text{arg}(z)$.  Now $|z| = \sqrt{a^2 + b^2} \in \mathbb{E} \implies e^{i\theta} = z/|z| \in \mathbb{F} $. Then $w = \sqrt{|z|} e^{i \frac{\theta}{2}} $ satisfies $w^2 = z$.  Furthermore, $\sqrt{|z|} \in \mathbb{E}$, since $\mathbb{E}$ is Euclidean and $e^{i \frac{\theta}{2}} \in \mathbb{F}$ by the result above. Hence, $\pm w \in \mathbb{F}$ are the square roots of $z$.

      \item Finally, consider the quadratic equation $az^2 + bz + c = 0$ with $a, b, c \in \mathbb{F}$. By the above, $\sqrt{b^2 - 4ac} \in \mathbb{F}$, which implies that all roots of the equation lie in $\mathbb{F}$.  But if $K$ is any quadratically closed field containing $\mathbb{E}$, then $i \in K \implies \mathbb{F} = \mathbb{E}(i) \subseteq K$, which implies that $\mathbb{F}$ is the quadratic closure of $\mathbb{E}$. \qedhere
  \end{enumerate}
\end{proof}

We use this to prove the final result in this section.

\begin{theorem}
    Let $\mathbb{F}$ be the quadratic closure of an Euclidean subfield $\mathbb{E}$ of $\mathbb{R}$ and let $f\colon \mathbb{F} \to \mathbb{F}$ be an $\mathbb{R}$ preserving SD map. Then $f$ must be the identity or the conjugate map.
\end{theorem}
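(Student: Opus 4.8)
The plan is to run the proof of Theorem~\ref{CRpres} essentially verbatim over $\mathbb{F}$, replacing the two facts special to $\mathbb{C}$ — that $f$ fixes the real axis, and that each element together with its half-angle unit lies in the field — by their $\mathbb{E}(i)$ analogues coming from Theorem~\ref{Euclidean} and Proposition~\ref{Fbasics}. By Proposition~\ref{Fbasics}(3) we may take $\mathbb{F} = \mathbb{E}(i)$, and since $\mathbb{E} \subseteq \mathbb{R}$ we have $\mathbb{F} \cap \mathbb{R} = \mathbb{E}$, while $\mathbb{F}$ is closed under complex conjugation.

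The first step is to show that $f$ fixes $\mathbb{E}$ pointwise. As $f$ is $\mathbb{R}$ preserving and lands in $\mathbb{F}$, it carries $\mathbb{E} = \mathbb{F} \cap \mathbb{R}$ into $\mathbb{F} \cap \mathbb{R} = \mathbb{E}$, so the restriction $f|_{\mathbb{E}} \colon \mathbb{E} \to \mathbb{E}$ is an SD map on the Euclidean field $\mathbb{E}$; by Theorem~\ref{Euclidean} it is the identity. This is the exact analogue of the opening line of Theorem~\ref{CRpres}, with Theorem~\ref{Euclidean} playing the role of Theorem~\ref{Rmaps}. Then $f(i)^2 = f(i^2) = f(-1) = -1$, and since $\pm i$ are the only square roots of $-1$ in $\mathbb{F}$, we obtain $f(i) = \pm i$.

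Next I would reprove the dichotomy of Proposition~\ref{Cbasics} for $\mathbb{F}$. Writing $z = a + ib$ with $a, b \in \mathbb{E}$ and feeding the pair $(a, ib)$ into the functional equation (valid whenever $z \neq 0$, as $a = ib$ forces $a = b = 0$), multiplicativity together with $f|_{\mathbb{E}} = \mathrm{id}$ and $f(i) = \pm i$ gives, for all $z \in \mathbb{F}^\times$, either $f(z/\bar z) = z/\bar z$ or $f(z/\bar z) = \bar z / z$, the two alternatives corresponding to $f(i) = i$ and $f(i) = -i$. This step uses nothing beyond $f$ fixing $\mathbb{E}$ and the value of $f(i)$, so it transfers unchanged from the complex case.

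Finally, for arbitrary $z \in \mathbb{F}^\times$ I would imitate the end of Theorem~\ref{CRpres}: factor $z = |z| e^{i\theta}$, where now $|z| = \sqrt{a^2 + b^2} \in \mathbb{E}$ because $\mathbb{E}$ is Euclidean, and $e^{i\theta} = z/|z| \in \mathbb{F}$; multiplicativity and $f(|z|) = |z|$ reduce matters to computing $f(e^{i\theta})$. Setting $w := e^{i\theta/2}$, which lies in $\mathbb{F}$ by Proposition~\ref{Fbasics}(1), we have $e^{i\theta} = w/\bar w$, so the dichotomy yields $f(e^{i\theta}) = e^{i\theta}$ in the first case and $f(e^{i\theta}) = e^{-i\theta}$ in the second, whence $f(z) = z$ or $f(z) = \bar z$. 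Together with $f(0) = 0$ this shows $f$ is the identity or the conjugation. I do not anticipate a genuine obstacle: the whole content is verifying that the two closure properties driving the $\mathbb{C}$ argument survive over $\mathbb{F}$, namely $\sqrt{a^2 + b^2} \in \mathbb{E}$ (the Euclidean hypothesis) and $e^{i\theta/2} \in \mathbb{F}$ (Proposition~\ref{Fbasics}(1)). The one step that must be stated carefully, rather than merely gestured at, is that $f$ restricts to an SD map of $\mathbb{E}$, since this is exactly what licenses invoking Theorem~\ref{Euclidean} in place of the real-line result.
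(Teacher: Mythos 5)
Your proposal is correct and follows the paper's proof essentially verbatim: reduce to $\mathbb{F}=\mathbb{E}(i)$ via Proposition~\ref{Fbasics}, fix $\mathbb{E}$ pointwise via Theorem~\ref{Euclidean}, establish the two-case dichotomy as in Proposition~\ref{Cbasics}, and finish by writing $z=|z|e^{i\theta}$ with $e^{i\theta/2}\in\mathbb{F}$. Your justification that $f$ restricts to an SD map $\mathbb{E}\to\mathbb{E}$ (since $f(\mathbb{E})\subseteq\mathbb{F}\cap\mathbb{R}=\mathbb{E}$) is a slightly more explicit route than the paper's appeal to the remark after Theorem~\ref{Euclidean}, but the substance is identical.
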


An example of such an $\mathbb{F}$ is the field of Algebraic Numbers --
the algebraic closure of $\mathbb{Q}$ in $\mathbb{C}$.

\begin{proof}
   By Proposition \ref{Fbasics}, $\mathbb{F} = \mathbb{E}(i)$. Since $f$ preserves $\mathbb{R}$, we know that $f(x) = x$ for all $x \in \mathbb{E}$. (See Theorem \ref{Euclidean} and the remark after the proof.)
   Hence, if $\mathbb{F}^\times$ is the multiplicative group of $\mathbb{F}$, the same reasoning as Proposition \ref{Cbasics}, gives us two possible cases.
   \begin{enumerate}
       \item $f(z/\bar{z}) = z/\bar{z}$ for all $z \in \mathbb{F}^{\times}$.
        \item $f(z/\bar{z}) = \bar{z}/z$ for all $z \in \mathbb{F}^{\times}$.
   \end{enumerate}
   (Since $\mathbb{F} = \mathbb{E}(i)$, $\bar{z}$ exists for all $z \in \mathbb{F}$.) 
   Now any $z \in \mathbb{F}^{\times}$ may be written as $z = |z| e^{i\theta}$ where $\theta = \arg(z)$. Furthermore, from Proposition~\ref{Fbasics}, $|z| \in \mathbb{E}$, $e^{i\theta} \in \mathbb{F}$ and $w = e^{i \frac{\theta}{2}} \in \mathbb{F}$.  Then
   \[f(z) = f(|z|e^{i \theta}) = f(|z|)f(e^{i \theta}) = |z| f(e^{i \theta}) \]
   because $f$ fixes $\mathbb{E}$.
 Using all the above, we consider the two cases:
   \begin{enumerate}
      \item Suppose $f(z/\bar{z}) = z/\bar{z}$ for all $z \in \mathbb{F}^\times$. Let $e^{i \theta} \in \mathbb{F} $ and let $w = e^{i \frac{\theta}{2}}$.
Then \[f(e^{i \theta}) = f\left(\frac{w}{\bar{w}}\right) = \frac{w}{\bar{w}} = e^{i \theta}. \]
    Hence, $f(z) = |z| f(e^{i \theta}) = |z| e^{i \theta} = z$ for all $z \in \mathbb{F}^\times.$

    \item Suppose $f(z/\bar{z}) = \bar{z}/z$ for all $z \in \mathbb{F}^\times$. Let $e^{i \theta} \in \mathbb{F} $ and let $w = e^{i \frac{\theta}{2}}$.
Then \[f(e^{i \theta}) = f\left(\frac{w}{\bar{w}}\right) = \frac{\bar{w}}{w} = e^{-i \theta}. \]
    Hence, $f(z) = |z| f(e^{i \theta}) = |z| e^{-i \theta} = \bar{z}$ for all $z \in \mathbb{F}^\times.$
\end{enumerate}
 Now the fact that $f(0) = 0$ completes the proof in either case.
\end{proof}

\section{Number Fields}

\noindent We now ask if there are other non-real subfields of $\mathbb{C}$ for which the only SD maps are $z$ and $\overline{z}$ without additional assumptions. The
answer is positive, and to show these results, we first provide a novel,
powerful approach. This approach is algebraic, and works well in algebraic number fields,
i.e.\ finite extensions of $\mathbb{Q}$ (which in particular are
finite-dimensional $\mathbb{Q}$-vector spaces). We illustrate this approach with the simplest class of number
fields: quadratic extensions of $\mathbb{Q}$ -- which include
$\mathbb{Q}(\sqrt{2})$ and the Gaussian rationals $\mathbb{Q}(i)$.

We begin with a key lemma, which, loosely speaking, states that if an SD map fixes two consecutive terms and the common difference of an arithmetic progression, then it fixes all the terms. 

\begin{lemma}[AP Lemma]\label{Llattice}
Suppose $\mathbb{F}$ is a field of characteristic $0$ and let $\{ \cdots, a, a + d, a+2d, \dots \}$ be an
arithmetic progression in
$\mathbb{F}$ that does not contain $0$. If $f \colon \mathbb{F} \to
\mathbb{F}$ is an SD map that fixes $a, a+d$, and  $d$, then $f$ fixes all terms of this progression.
\end{lemma}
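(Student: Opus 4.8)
The plan is to index the progression by setting $t_n := a + nd$ for $n \in \mathbb{Z}$, so that the hypotheses read $f(t_0) = t_0$, $f(t_1) = t_1$, and $f(d) = d$, and the goal becomes $f(t_n) = t_n$ for every $n$. The engine of the proof is a three-term recurrence obtained by feeding consecutive terms of the progression into the functional equation, in the same spirit as the recurrence~\eqref{Erecur} used in Theorem~\ref{QRESULT}.

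Concretely, the first step is to substitute $x = t_n$ and $y = d$ into~\eqref{Esumdiff}. The whole point of this choice is that $t_n + d = t_{n+1}$ and $t_n - d = t_{n-1}$, so the argument on the left collapses to the ratio of the two neighbouring terms:
\[
f\!\left(\frac{t_{n+1}}{t_{n-1}}\right) = \frac{f(t_n) + f(d)}{f(t_n) - f(d)} = \frac{f(t_n) + d}{f(t_n) - d}.
\]
Next I would invoke the multiplicativity of $f$ together with $f(t_{n-1}) \neq 0$ to rewrite the left-hand side as $f(t_{n+1})/f(t_{n-1})$, producing the recurrence
\[
f(t_{n+1}) = f(t_{n-1}) \cdot \frac{f(t_n) + d}{f(t_n) - d}.
\]
A two-sided induction then finishes the job: assuming $f(t_{n-1}) = t_{n-1}$ and $f(t_n) = t_n$, the right-hand side becomes $t_{n-1} \cdot \frac{t_n + d}{t_n - d} = t_{n-1}\cdot \frac{t_{n+1}}{t_{n-1}} = t_{n+1}$, so $f(t_{n+1}) = t_{n+1}$. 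The fixed base terms $t_0, t_1$ launch the induction upward, and solving the same recurrence for $f(t_{n-1}) = f(t_{n+1})\cdot \frac{f(t_n)-d}{f(t_n)+d}$ runs it downward, covering all $n \in \mathbb{Z}$.

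The one place that genuinely requires care — and the reason the hypothesis that the progression omits $0$ is indispensable — is verifying that every step is legal. The substitution demands $t_n \neq d$ (so that $x \neq y$), the multiplicative split needs $f(t_{n-1}) \neq 0$, the upward division needs $f(t_n) - d \neq 0$, and the downward division needs $f(t_n) + d \neq 0$. I expect this bookkeeping to be the main (indeed the only) obstacle, but it resolves cleanly on its own: since $t_n - d = t_{n-1}$ and $t_n + d = t_{n+1}$ are themselves terms of the progression, the assumption that $0$ is not a term forces $t_n \neq d$ and $t_n \neq -d$ for every relevant $n$. Injectivity of $f$ with $f(0) = 0$ then upgrades these to $f(t_{n-1}) \neq 0$ and $f(t_n) \neq \pm d$, where the sign uses that $f$ is odd, so $f(-d) = -d$. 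Thus no degenerate index ever arises and the recurrence is valid throughout.
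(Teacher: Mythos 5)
Your proposal is correct and takes essentially the same route as the paper: substitute a term of the progression together with the common difference $d$ into~\eqref{Esumdiff}, use multiplicativity to turn $f\bigl(t_{n+1}/t_{n-1}\bigr)$ into $f(t_{n+1})/f(t_{n-1})$, and run a two-sided induction from the fixed terms $a$, $a+d$. The only cosmetic difference is that the paper handles the backward direction by substituting $y=-d$ and invoking oddness rather than inverting the forward recurrence, and your nonvanishing bookkeeping is somewhat more explicit than the paper's.
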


\begin{proof}
The result is obvious when $d =0$. So assume $d \ne 0$.  By Theorem~\ref{Tmult}, $f$ is injective and multiplicative on
$\mathbb{F}$. Now setting $x = a+d$ and $y = d$ in~\eqref{Esumdiff}, we get:
\[
f \left( \frac{a+2d}{a} \right) = \frac{f(a+d) + f(d)}{f(a+d)-f(d)} =
\frac{(a+d) + (d)}{(a+d)-(d)} =
\frac{a+2d}{a},
\]
where the denominators are nonzero because $f$ 
injective. Now, by multiplicativity, and using the fact that $f(a) =a$, we get $f(a+2d) = a+2d$. One can now proceed ``forward'' inductively. 

The backward direction is similar: set $x=a$ and $y=-d$. Since $f$ is an odd function,
\[f \left(\frac{a-d}{a+d} \right) = \frac{f(a)+f(-d)}{f(a)+f(d)} = \frac{f(a) - f(d)}{f(a)+f(d)} =\frac{a-d}{a+d}\]
Using multiplicativity of $f$ and the fact that $f(a+d)= a+d$, we get that $f(a-d) = a-d$. Similarly, one can proceed ``backward'' inductively. 
\end{proof}

\begin{proposition}\label{Psunil}
Suppose  $d \in \mathbb{Q}$ is such that
$\sqrt{d} \not\in \mathbb{Q}$. 
Define the conjugation automorphism on
$\mathbb{Q}(\sqrt{d})$ via: $\overline{a + b \sqrt{d}} := a - b\sqrt{d}$ for $a,b \in \mathbb{Q}$.
Let $f$ be an SD map on $\mathbb{Q}(\sqrt{d})$. Then exactly one of the following happens:
\begin{enumerate}
\item $f(\sqrt{d}) = \sqrt{d}$, and for any $z$, $f(z) = \pm z$; or
\item $f(\sqrt{d}) = -\sqrt{d}$,  and for any $z$,  $f(z) = \pm  \overline{z}$.
\end{enumerate}
\end{proposition}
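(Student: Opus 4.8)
The plan is to imitate the argument used over $\mathbb{C}$ in Proposition~\ref{Cbasics} and Theorem~\ref{CRpres}, with the conjugation of $\mathbb{Q}(\sqrt d)$ playing the role of complex conjugation and the field norm playing the role of $|z|^2$. Throughout I freely use that an SD map on a characteristic-$0$ field is injective, odd, and multiplicative, and that by Theorem~\ref{Tmult} it fixes $\mathbb{Q}$ pointwise. First I would pin down $f(\sqrt d)$: since $f$ is multiplicative and fixes $\mathbb{Q}$, we have $f(\sqrt d)^2 = f(d) = d$, so $f(\sqrt d)$ is a square root of $d$ in $\mathbb{Q}(\sqrt d)$, i.e.\ $f(\sqrt d) \in \{\sqrt d, -\sqrt d\}$. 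As $\sqrt d \neq -\sqrt d$, this value is a single well-defined element, which cleanly splits the argument into the two mutually exclusive cases of the statement and establishes the ``exactly one'' clause. For $b \in \mathbb{Q}$ this already gives $f(b\sqrt d) = b\, f(\sqrt d) = \pm b\sqrt d$, handling the ``purely irrational'' part.

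Next, for a general nonzero $z = a + b\sqrt d$ with $a, b \in \mathbb{Q}$, I would feed $x = a$ and $y = b\sqrt d$ into the functional equation~\eqref{Esumdiff}. Since $\sqrt d \notin \mathbb{Q}$ forces $a \neq b\sqrt d$ whenever $z \neq 0$, the substitution is legal, and because $\frac{x+y}{x-y} = \frac{a + b\sqrt d}{a - b\sqrt d} = z/\bar z$, it yields $f(z/\bar z) = \frac{f(a) + f(b\sqrt d)}{f(a) - f(b\sqrt d)}$. Plugging in $f(a) = a$ and $f(b\sqrt d) = \pm b\sqrt d$ gives $f(z/\bar z) = z/\bar z$ in Case~1 and $f(z/\bar z) = \bar z/z$ in Case~2 -- exactly the analogue of Proposition~\ref{Cbasics}.

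Finally I would pass from $z/\bar z$ to $z^2$ using the norm $N := z\bar z = a^2 - b^2 d \in \mathbb{Q}$. The crucial observation -- and the step I expect to be the real (if modest) obstacle -- is that $N \neq 0$ for every $z \neq 0$: if $N = 0$ with $b \neq 0$, then $d = (a/b)^2$ would be a rational square, contradicting $\sqrt d \notin \mathbb{Q}$, while $b = 0$ forces $z \in \mathbb{Q}^\times$, so $N \neq 0$ again. This is precisely where the hypothesis $\sqrt d \notin \mathbb{Q}$ enters, replacing the inequality $|z|^2 > 0$ used in the $\mathbb{C}$-argument. Granting $N \in \mathbb{Q}^\times$, multiplicativity together with $f(N) = N$ gives $f(z/\bar z) = f(z^2/N) = f(z^2)/N$, whence $f(z^2) = z^2$ in Case~1 and $f(z^2) = \bar z^2$ in Case~2. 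Since $f(z^2) = f(z)^2$, I conclude $f(z)^2 = z^2$, i.e.\ $f(z) = \pm z$ (Case~1), respectively $f(z)^2 = \bar z^2$, i.e.\ $f(z) = \pm \bar z$ (Case~2); the case $z = 0$ is trivial as $f(0) = 0$.

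I would emphasize that the sign in $f(z) = \pm z$ may a priori depend on $z$, so this proposition is only the \emph{pointwise} dichotomy, not yet the assertion that $f$ equals the identity or the conjugation globally. Upgrading it to a uniform sign is exactly the task reserved for the AP Lemma~\ref{Llattice}, and so that lemma is not needed in the proof of this proposition itself.
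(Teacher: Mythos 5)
Your proposal is correct and follows essentially the same route as the paper: pin down $f(\sqrt d)=\pm\sqrt d$ via multiplicativity, substitute $x=a$, $y=b\sqrt d$ to compute $f(z/\bar z)$, and then use the rational norm $z\bar z$ in place of $|z|^2$ to upgrade this to $f(z)^2=z^2$ or $\bar z^{\,2}$. If anything, you are slightly more careful than the paper in checking that the norm is a nonzero rational and that the substitution $x\neq y$ is legal, and your closing remark that the sign may depend on $z$ (with the uniformization deferred to the AP Lemma) matches the paper's intent exactly.
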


\begin{proof}
By Theorem~\ref{Tmult}, $f$ is multiplicative on the quadratic extension
$\mathbb{Q}(\sqrt{d})$. Next, using the hypotheses, $f(\sqrt{d})^2 = f(d)
= d$, we get $f(\sqrt{d}) = \pm \sqrt{d}$. Now there are two cases.
\begin{enumerate}
\item $f(\sqrt{d}) = \sqrt{d}$. Then $f(y'\sqrt{d}) = y'\sqrt{d}$ for all
$y' \in \mathbb{Q}$. Now let $x \in \mathbb{Q}$ and $y = y'\sqrt{d}$ for
$y' \in \mathbb{Q}$, in~\eqref{Esumdiff}. Then for $(x,y') \neq (0,0)$ we
have:
\[
f \left( \frac{x+y'\sqrt{d}}{x-y'\sqrt{d}} \right) =
\frac{f(x)+f(y'\sqrt{d})}{f(x)-f(y'\sqrt{d})} =
\frac{x+y'\sqrt{d}}{x-y'\sqrt{d}}.
\]

Now let $z = x + y'\sqrt{d} \in \mathbb{Q}(\sqrt{d})^\times$ and define
$|z| := z \overline{z} = x^2 - d(y')^2 \in \mathbb{Q}^\times$ (and define
$|0| := 0$). The multiplicativity and ``fixing of $\mathbb{Q}$'' by $f$
yields:
\begin{equation}\label{Esunil}
\frac{1}{|z|^2} f(z)^2 =
\frac{1}{|z|^2} f(z^2) = f(z^2/|z|^2) = f(z/\overline{z}) =
z/\overline{z} = z^2/|z|^2.
\end{equation}
Hence $f(z)^2 = z^2$ for all $z$, including $z=0$. This means, for any $z$,  $f(z) = \pm z$.


\item $f(\sqrt{d}) = -\sqrt{d}$. This case is similar: $f(y'\sqrt{d}) = -y'\sqrt{d}$ for all $y' \in
\mathbb{Q}$, and the above choice of $y = y'\sqrt{d}$ and $x$, with $z = x + y' \sqrt{d}$ yields:
\[
\frac{1}{|z|^2} f(z)^2 = f(z/\overline{z}) = \overline{z}/z =
\overline{z}^2 / |z|^2.
\]

Hence $f(z)^2 = \bar{z}^2$ for all $z$, including $z=0$. This means, for any $z$,  $f(z) = \pm \bar{z}$.
\end{enumerate}
\end{proof}

We now prove the main theorem of this section.

\begin{theorem}[All quadratic fields]\label{Tquad}
Suppose $d$ is an integer with $\sqrt{d} \not\in \mathbb{Q}$. If $f
\colon \mathbb{Q}(\sqrt{d}) \rightarrow  \mathbb{Q}(\sqrt{d}) $ is an SD
map, then
$f$ is either the identity or the conjugate map.
\end{theorem}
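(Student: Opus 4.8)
The plan is to build directly on Proposition~\ref{Psunil}, which has already done the heavy lifting: it reduces any SD map $f$ on $\mathbb{Q}(\sqrt d)$ to exactly one of two cases --- either $f(\sqrt d)=\sqrt d$ with $f(z)\in\{z,-z\}$ for every $z$, or $f(\sqrt d)=-\sqrt d$ with $f(z)\in\{\overline z,-\overline z\}$ for every $z$. What remains is purely to rule out \emph{sign-flipping}, that is, to promote these pointwise sign choices to a single global sign. The second case is entirely symmetric to the first (replace $z$ by $\overline z$ throughout, or observe that $\overline{\,\cdot\,}\circ f$ is again an SD map lying in the first case), so I would prove that the first case forces $f=\mathrm{id}$ and then read off $f(z)=\overline z$ in the second.

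Assume therefore that $f(z)\in\{z,-z\}$ for all $z$, and suppose toward a contradiction that $f(\alpha)=-\alpha$ for some $\alpha$. By Theorem~\ref{Tmult}, $f$ fixes $\mathbb{Q}$ pointwise, so $\alpha\neq 0$ forces $\alpha\notin\mathbb{Q}$; in particular $\alpha\neq q$ for every rational $q$. The key move is to feed $x=\alpha$ and $y=q$ (with $q\in\mathbb{Q}^\times$) into~\eqref{Esumdiff}. Writing $w:=(\alpha+q)/(\alpha-q)$, the right-hand side collapses, using $f(\alpha)=-\alpha$ and $f(q)=q$, to
\[
\frac{-\alpha+q}{-\alpha-q}=\frac{\alpha-q}{\alpha+q}=\frac{1}{w},
\]
so that $f(w)=1/w$.

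Now I would invoke Proposition~\ref{Psunil} once more, but applied to the single element $w$: it guarantees $f(w)=\pm w$. Comparing with $f(w)=1/w$ yields $w^2=\pm 1$, i.e.\ $(\alpha+q)^2=\pm(\alpha-q)^2$. The $+$ sign gives $4\alpha q=0$, which is impossible; the $-$ sign gives $\alpha^2=-q^2$. The finishing stroke is that this last equation must hold \emph{for every} nonzero rational $q$ simultaneously: taking $q=1$ forces $\alpha^2=-1$ while $q=2$ forces $\alpha^2=-4$, a contradiction. Hence no sign flip can occur, $f=\mathrm{id}$ in the first case, and $f(z)=\overline z$ in the second.

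I expect the only genuine subtlety --- the main obstacle --- to be the real possibility that $\alpha^2=-1$ is solvable in $\mathbb{Q}(\sqrt d)$, which happens precisely when $\mathbb{Q}(\sqrt d)=\mathbb{Q}(i)$; this is exactly why a single comparison point $q$ cannot suffice and why I compare against two distinct rationals to extract incompatible values of $\alpha^2$. Everything else is a one-line computation, and it is worth noting that this route does not require the AP Lemma~(Lemma~\ref{Llattice}), although that lemma furnishes an alternative propagation argument.
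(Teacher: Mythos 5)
Your proposal is correct, and it takes a genuinely different route from the paper. The paper's proof first reduces to elements $m+n\sqrt{d}$ with integer coordinates, pins down the sign of $f(1+\sqrt{d})$ by computing $f(2+\sqrt{d})$ two ways and deriving $4\sqrt{d}\in\mathbb{Q}$ from the wrong sign, and then propagates over the lattice $\mathbb{Z}+\mathbb{Z}\sqrt{d}$ by repeated use of the AP Lemma~\ref{Llattice}. You instead kill an arbitrary sign flip in one stroke: if $f(\alpha)=-\alpha$ with $\alpha\notin\mathbb{Q}$, then for every nonzero rational $q$ the element $w=(\alpha+q)/(\alpha-q)$ satisfies both $f(w)=1/w$ (from the functional equation) and $f(w)=\pm w$ (from Proposition~\ref{Psunil}), forcing $\alpha^2=-q^2$ for all such $q$ at once --- and two choices of $q$ are incompatible. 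This is shorter, avoids the AP Lemma and the lattice induction entirely, needs no reduction to integer coordinates, and correctly isolates (and disposes of) the one real danger, namely that $w^2=-1$ is solvable in $\mathbb{Q}(i)$. Your reduction of the conjugate case to the identity case via $\overline{\,\cdot\,}\circ f$ is also valid, since post-composing an SD map with a field automorphism yields an SD map. One cosmetic point: when you introduce $\alpha$, you should say explicitly that you pick $\alpha$ with $f(\alpha)=-\alpha\neq\alpha$ (so $\alpha\neq 0$ and hence $\alpha\notin\mathbb{Q}$); as written, ``$\alpha\neq 0$ forces $\alpha\notin\mathbb{Q}$'' slightly garbles the logic. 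The only thing the paper's heavier machinery buys is the AP Lemma itself, which the authors evidently intend as a reusable tool for more general number fields; your argument, by contrast, works verbatim in any field where the dichotomy of Proposition~\ref{Psunil} is available.
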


\begin{proof}
Any element in $ \mathbb{Q}(\sqrt{d})$ can be written as $r(m+n\sqrt{d})$ where $r \in \mathbb{Q}$ and $m$ and $n$ are integers. Since our SD map $f$ is multiplicative and fixes rationals, it is enough to prove the result for elements of the form $m + n \sqrt{d}$, where both $m$ and $n$ are integers. We apply Proposition~\ref{Psunil}, and consider the two possibilities
separately:
\begin{enumerate}
\item $f(\sqrt{d}) = \sqrt{d}$: By Proposition~\ref{Psunil}, $f(1+\sqrt{d}) = \pm (1 + \sqrt{d})$. We will show that $f(1+\sqrt{d}) =  (1 + \sqrt{d})$.
Suppose for contradiction that $f(1+\sqrt{d}) = -(1+\sqrt{d})$. From
this and $f(\sqrt{d}) = \sqrt{d}$, by Lemma~\ref{Llattice} we get
\begin{align*}
f(2+\sqrt{d}) = &\ 
\sqrt{d} \cdot f \left( \frac{2 + \sqrt{d}}{\sqrt{d}} \right) = \sqrt{d} \cdot f \left( \frac{(1 + \sqrt{d}) + 1}{(1 + \sqrt{d}) - 1} \right)\\
= &\ \sqrt{d} \cdot \frac{-(1+\sqrt{d}) + 1}{-(1+\sqrt{d})-1}
= \frac{d}{2+\sqrt{d}}.
\end{align*}

If this equaled $\pm (2+\sqrt{d})$, then
$0 = (2+\sqrt{d})^2 \pm d$, so $4\sqrt{d} =  -4 - d \mp d
\in \mathbb{Q}$,
which is false. Thus $f(2+\sqrt{d}) \neq \pm (2+\sqrt{d})$, which
contradicts Proposition~\ref{Psunil}.

Now, we repeatedly apply our AP lemma to show that $f$ fixes $m+n \sqrt{d}$ for all integers $m$ and $n$. Place each $m+n \sqrt{d}$ at the corresponding lattice point $(m, n)$ and note that $f$ fixes elements on the coordinate axes. Since $1+\sqrt{d}$ and $\sqrt{d}$ are fixed, $f$ fixes all elements at the vertical line $x=1$. To see that all elements along each horizontal line  $y=n$ are fixed, apply the AP lemma for the arithmetic progression with consecutive terms $n \sqrt{d}$ and $1+ n \sqrt{d}$. 

\item $f(\sqrt{d}) = -\sqrt{d}$: we claim that $f(1+\sqrt{d}) =
1-\sqrt{d}$. This is  proved as above: if not, then $f(1+\sqrt{d}) =
\sqrt{d}-1$; now
\[
f(2+\sqrt{d}) = -\sqrt{d} \cdot \frac{\sqrt{d}-1+1}{\sqrt{d}-1-1} =
\frac{-d}{\sqrt{d}-2}.
\]
By Proposition~\ref{Psunil}, this equals $\pm (2-\sqrt{d})$, so we
again get $4 \sqrt{d} \in \mathbb{Q}$, which is false 
The rest of the proof is again similar to the above, using the AP lemma applied to the map $\bar{f}$; we leave this as an exercise to the reader.
\qedhere

\end{enumerate}
\end{proof}

Theorem~\ref{Tquad} is a strong result with several consequences worth mentioning explicitly.
For instance, it implies, without any additional assumptions, that the only SD maps over $\mathbb{Q}(\sqrt{d}) \subset \mathbb{R}$ are precisely the only possible field automorphisms, $z$ and $\bar{z}$. (In fact, these are the two maps on $\mathbb{Q}(\sqrt{2})$ that we alluded to  immediately after Corollary \ref{Qpcont}.)

Moreover, this also covers all quadratic extensions of
$\mathbb{Q}$,\footnote{To see why: all such extensions are $\mathbb{Q}[X]
/ (aX^2 + bX + c)$ with $a,b,c$ rational and $a \neq 0$. Thus, we 
attach to $\mathbb{Q}$ a root $\alpha$ of the rational quadratic
polynomial $X^2 + (b/a)X + (c/a)$, i.e.\ of $(X + (b/2a))^2 -
\frac{b^2-4ac}{4a^2}$. I.e., we attach $\sqrt{p/q}$
where $p,q \in \mathbb{Z}$ and $p/q = b^2-4ac$. This is equivalent to
attaching $\sqrt{d}$, where $d=pq$.}
including the ``Gaussian fields'' $\mathbb{Q}(i)$ as well as the
cyclotomic field $\mathbb{Q}(\zeta_3)$, where $\zeta_3 = \frac{-1 + i
\sqrt{3}}{2}$ is a primitive cube root of unity (so $d = -3$).

Finally, if we add a continuity condition, the following result is immediate.

\begin{corollary}
    Let $\mathbb{F}$ be a field such that $\mathbb{Q}(i) \subseteq \mathbb{F} \subseteq \mathbb{C}$ and let 
    $f\colon \mathbb{F} \to \mathbb{F}$ be a continuous SD map. Then $f$ must be the identity or the conjugate map.
\end{corollary}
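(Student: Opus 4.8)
The plan is to reduce the problem to the Gaussian rationals $\mathbb{Q}(i)$, where the answer is already pinned down by Theorem~\ref{Tquad}, and then to propagate that conclusion to all of $\mathbb{F}$ by a density-plus-continuity argument. First I would record the purely algebraic input available for free: by Theorem~\ref{Tmult} the map $f$ fixes $\mathbb{Q}$ pointwise and is multiplicative, so $f(i)^2 = f(i^2) = f(-1) = -1$ forces $f(i) = \pm i$. This is the only branching that will occur.

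The key intermediate step is to show that $f$ maps $\mathbb{Q}(i)$ into itself, so that $f|_{\mathbb{Q}(i)}$ is a genuine SD map $\mathbb{Q}(i) \to \mathbb{Q}(i)$ to which Theorem~\ref{Tquad} can be applied. This is exactly the computation behind Proposition~\ref{Psunil} with $d = -1$: that argument uses only that $f$ is multiplicative, fixes $\mathbb{Q}$, and is defined on $\mathbb{Q}(i) \subseteq \mathbb{F}$, and it never requires $f$ to be a self-map of $\mathbb{Q}(i)$ at the outset. It yields that for every $z \in \mathbb{Q}(i)$ either $f(z) = \pm z$ (when $f(i) = i$) or $f(z) = \pm\overline{z}$ (when $f(i) = -i$); in either case $f(z) \in \mathbb{Q}(i)$. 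Hence $f(\mathbb{Q}(i)) \subseteq \mathbb{Q}(i)$, and since $f$ satisfies~\eqref{Esumdiff} for all distinct arguments in $\mathbb{F} \supseteq \mathbb{Q}(i)$, the restriction $f|_{\mathbb{Q}(i)}$ is an SD map on $\mathbb{Q}(i)$. Applying Theorem~\ref{Tquad} with $d = -1$ then eliminates the sign ambiguity entirely: $f|_{\mathbb{Q}(i)}$ is \emph{exactly} the identity or \emph{exactly} conjugation.

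Finally I would invoke topology. The subfield $\mathbb{Q}(i) = \mathbb{Q} + \mathbb{Q}i$ is dense in $\mathbb{C}$, hence dense in $\mathbb{F}$ in the subspace topology. Viewing $f$, the identity, and $z \mapsto \overline{z}$ as continuous maps $\mathbb{F} \to \mathbb{C}$, the map $f$ agrees with one of the latter two on the dense subset $\mathbb{Q}(i)$; since $\mathbb{C}$ is Hausdorff and two continuous maps into a Hausdorff space that agree on a dense set agree everywhere, we conclude that $f$ is the identity on all of $\mathbb{F}$ in the first case, or equals $z \mapsto \overline{z}$ on all of $\mathbb{F}$ in the second. (Comparing maps into $\mathbb{C}$, rather than into $\mathbb{F}$, sidesteps having to know in advance that $\mathbb{F}$ is closed under conjugation; this closure then comes out as a byproduct in the second case.)

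The step I expect to be the main, if mild, obstacle is the second one. It is tempting to imitate the connectedness argument from the proof of Theorem~\ref{Ccont}, applying it to $f(z)/z \in \{\pm 1\}$ on $\mathbb{F}^\times$. This fails in general, because $\mathbb{F}^\times$ need not be connected: for $\mathbb{F} = \mathbb{Q}(i)$ it is a countable, totally disconnected subset of $\mathbb{C}^\times$. The sign must instead be resolved over $\mathbb{Q}(i)$ via Theorem~\ref{Tquad} and only then transported to $\mathbb{F}$ by density, which is precisely why establishing that $f|_{\mathbb{Q}(i)}$ is a bona fide SD self-map of $\mathbb{Q}(i)$ is the crux.
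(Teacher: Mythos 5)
Your proof is correct and follows exactly the route the paper intends: the paper simply declares the corollary ``immediate'' from Theorem~\ref{Tquad}, and you have supplied the missing details -- that the computation of Proposition~\ref{Psunil} with $d=-1$ shows $f$ restricts to an SD self-map of $\mathbb{Q}(i)$, that Theorem~\ref{Tquad} then pins down this restriction as the identity or conjugation, and that density of $\mathbb{Q}(i)$ in $\mathbb{F}$ together with continuity propagates the conclusion to all of $\mathbb{F}$. Your observations that the connectedness argument of Theorem~\ref{Ccont} is unavailable here and that comparing maps into $\mathbb{C}$ sidesteps knowing in advance that $\mathbb{F}$ is conjugation-closed are both apt.
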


\section{SD maps that are not automorphisms}
At this point, it is natural for the reader to wonder whether field automorphisms are the only
solutions to our functional equation.
After all, pure algebraic manipulation of the functional equation showed that all solutions
must be injective and multiplicative. Could further manipulation perhaps yield surjectivity
and additivity as well?

Note that if this were the case, then the result would hold over all
fields. However, the following example shows that unlike injectivity,
surjectivity need not follow from the functional equation.

\begin{example}
Let $\mathbb{K}$ be any field, and consider its transcendental extension:
\[
\mathbb{F} := \mathbb{K}(x_0, x_1, x_2, \dots).
\]
Define a map $ f : \mathbb{F}
\to \mathbb{F} $ that fixes all elements of $\mathbb{K}$ and  sends $x_n \mapsto x_{n+1}$ for all $n \geq 0$. 
This is a field monomorphism, and hence an SD map. However, this is not an automorphism for any base
field $\mathbb{K}$ because it is not surjective.
\end{example}

Here is yet another example of an SD map that is not surjective. The field here is, in fact, a subfield of $\mathbb{R}$.

\begin{example}
Consider the subfield $\mathbb{F} = \mathbb{Q}(\pi)$ -- since $\pi$ is
transcendental over $\mathbb{Q}$ -- and for $k \in \mathbb{Z}$ define the
map $f_k : \mathbb{F} \to \mathbb{F}$ by
\begin{equation}\label{Efk}
f_k : \frac{p(\pi)}{q(\pi)} \mapsto \frac{p(\pi^k)}{q(\pi^k)}, \qquad
p,q \in \mathbb{Q}[x].
\end{equation}
Then $f_2, f_3, \dots$ are all SD maps, but not surjective.
\end{example}

The above, of course, leaves open the question of whether automorphisms are the only
surjective SD-maps over a field. However, rather than imposing surjectivity artificially, it would be more interesting to
consider SD-maps over a finite field, where injectivity automatically implies surjectivity.
So, are automorphisms the only SD-maps over a finite field? To our surprise, even that is not true as shown in the example below.

\begin{example}
Consider $\mathbb{F}_5$ -- the field of $5$ elements. Note that if $f$ is an SD map on this field, then the restriction of $f$ to its multiplicative group $\mathbb{F}_5^\times \cong C_4$ must be an automorphism. But what are automorphisms of $C_4$? It is either $x \mapsto x$ or $x \mapsto x^3$. The identity map is clearly an automorphism of $\mathbb{F}_5$. What about the $x \mapsto x^3$ map on $\mathbb{F}_5$? We leave it as an amusing exercise to the reader to verify that this map is not an automorphism, but an SD map on $\mathbb{F}_5$!
\end{example}

The above examples hint at something deeper underlying our functional
equation and raise several natural questions. The final example,
especially, provides a strong motivation to explore SD maps over fields
of characteristic $p$. What, if anything, is special about the field with
five elements? Can we classify  SD maps over other finite fields? Are
field automorphisms the only SD maps over algebraic extensions of finite
fields? These questions point to a broader and richer theory, which we
will address in a forthcoming sequel \cite{CKS}.

\subsection*{Concluding remarks.}

We end on a philosophical note. The Cauchy functional
equation~\eqref{Ecauchy} is a hundred years old and has seen much study.
Nowadays, one frequently encounters functional equations in mathematics contests -- where they are typically solved via ingenious algebraic manipulations. 
However, solving~\eqref{Ecauchy} quickly led mathematicians to a fruitful study of deeper
topics in analysis such as continuity, boundedness, and measurability.

Similarly, in this work, we see many different proof-philosophies
emerging from our equation~\eqref{Esumdiff} whose full ramifications go
beyond merely solving~\eqref{Esumdiff}).
The first is algebraic: showing via clever manipulations
that~\eqref{Esumdiff} implies fixing $\mathbb{Q}$, via solving the cubic
$f(8) = f(2)f(4)$ in a field.
The second relates to analysis: a ``freshman calculus'' level density argument and ordering inside $\mathbb{R}$ (or the real/complex constructible or algebraic numbers in
it).
The third is topological, in the case of continuous solutions over $\mathbb{C}$. Finally, a fourth approach using arithmetic progressions is useful in number fields like
$\mathbb{Q}(\sqrt{d})$.
Thus, the functional equation~\eqref{Esumdiff} leads one to explore
a rich body of results in multiple branches of mathematics.


\end{document}